%% LyX 2.3.0 created this file.  For more info, see http://www.lyx.org/.
%% Do not edit unless you really know what you are doing.

\pdfoutput = 1

\documentclass[english]{article}
\usepackage{lmodern}
\usepackage[T1]{fontenc}
\usepackage[latin9]{inputenc}
\usepackage{geometry}
\geometry{verbose,tmargin=1in,bmargin=1in,lmargin=1in,rmargin=1in}
\usepackage{color}
\usepackage{babel}
\usepackage{float}
\usepackage{amsmath}
\usepackage{amsthm}
\usepackage{amssymb}
\usepackage{graphicx}
\usepackage[authoryear]{natbib}
%\usepackage[unicode=true,pdfusetitle,
 %bookmarks=true,bookmarksnumbered=false,bookmarksopen=false,
 %breaklinks=false,pdfborder={0 0 1},backref=false,colorlinks=false]
 %{hyperref}

\makeatletter

%%%%%%%%%%%%%%%%%%%%%%%%%%%%%% LyX specific LaTeX commands.
\newcommand{\noun}[1]{\textsc{#1}}
%% Because html converters don't know tabularnewline
\providecommand{\tabularnewline}{\\}
\floatstyle{ruled}
\newfloat{algorithm}{tbp}{loa}
\providecommand{\algorithmname}{Algorithm}
\floatname{algorithm}{\protect\algorithmname}

%%%%%%%%%%%%%%%%%%%%%%%%%%%%%% Textclass specific LaTeX commands.
\theoremstyle{plain}
\newtheorem{thm}{\protect\theoremname}[section]
\theoremstyle{definition}
\newtheorem{defn}[thm]{\protect\definitionname}
\theoremstyle{plain}
\newtheorem{assumption}[thm]{\protect\assumptionname}
\theoremstyle{definition}
\newtheorem{example}[thm]{\protect\examplename}
\theoremstyle{plain}
\newtheorem{lem}[thm]{\protect\lemmaname}
\theoremstyle{plain}
\newtheorem{prop}[thm]{\protect\propositionname}

\makeatother

\providecommand{\assumptionname}{Assumption}
\providecommand{\definitionname}{Definition}
\providecommand{\examplename}{Example}
\providecommand{\lemmaname}{Lemma}
\providecommand{\propositionname}{Proposition}
\providecommand{\theoremname}{Theorem}

\begin{document}

\title{Data-driven satisficing measure and ranking}
\author{Wenjie Huang \footnote{wenjie\_huang@u.nus.edu, Department of Industrial Systems Engineering and Management at National University of Sinagpore.} }
\maketitle
\begin{abstract}
We propose an computational framework for real-time risk assessment
and prioritizing for random outcomes without prior information on
probability distributions. The basic model is built based on satisficing
measure (SM) which yields a single index for risk comparison. Since
SM is a dual representation for a family of risk measures, we consider
problems constrained by general convex risk measures and specifically
by Conditional value-at-risk. Starting from offline optimization,
we apply sample average approximation technique and argue the convergence
rate and validation of optimal solutions. In online stochastic optimization
case, we develop primal-dual stochastic approximation algorithms respectively
for general risk constrained problems, and derive their regret bounds.
For both offline and online cases, we illustrate the relationship
between risk ranking accuracy with sample size (or iterations).\\
\emph{}\\
\emph{Keywords:} Risk measure; Satisficing measure; Online stochastic
optimization; Stochastic approximation; Sample average approximation;
Ranking; 
\end{abstract}

\section{Introduction}

Risk assessment is the process where we identify hazards, analyze
or evaluate the risk associated with that hazard, and determine appropriate
ways to eliminate or control the hazard. Risk assessment techniques
have been widely applied in many area including quantitative financial
engineering (\citealt{Krokhmal2002}), health and environment study
(\citealt{Zhang2012,VanAsselt2013}), transportation science (\citealt{Liu2017}),
etc. \citet{Paltrinieri2014} point out that traditional risk assessment
methods are often limited by static, one-time processes performed
during the design phase of industrial processes. As such they often
use older data or generic data on potential hazards and failure rates
of equipment and processes and cannot be easily updated in order to
take into account new information, giving a more complete view of
the related risks. This failure to account for new information can
lead to unrecognized hazards, or misunderstandings about the real
probability of their occurrence under current management and safety
precautions. With the rapid development of computational intelligence
and corresponding decision support system, as well as the launch of
``Big data'' era, nowadays, new risk assessment technique should
allow decision maker to update the assessment results by observing
new information or data and realize quick response to dynamic environment.
In this paper, we develop a satisficing measure based model to assess,
compare and ranking random outcomes, and propose both online and offline
data-driven computational frameworks. We validate the methods both
theoretically and experimentally. The title of this paper ``Data-driven''
means that the probability distribution of the randomness is not available
in our settings, and we conduct risk assessment and ranking, only
based on observed empirical data. 

The core of real-time assessment allows us to update the assessment
results by observing new information or data and realize quick response
to dynamic environment. Two most commonly used real-time assessment
techniques are Hidden Markov model (HMM) and Bayesian network introduced
by \citet{Ghahramani2001}. HMM models used in \citet{Tan2008,Li2009,Yu-Ting2014,Haslum2006}
can combine both external and internal threats in network security
systems, and the experiment results show that such method can improve
the accuracy and reliability of assessment than the statics approaches.
Bayesian approaches used in \citet{Sandoy2006} can solve problems
generated by introducing or not introducing an underlying probability
model. The pairwise comparison theory (also called Bradley-Terry-Luce
(BTL) model) widely used in study the preference in decision making
was introduced in \citet{Bradley1952,Luce2005}. A Bayesian approximation
method with BTL model in \citet{Weng2011}, is proposed for online
ranking in team performance. However, these methodologies do not consider
assessing the ``risk'' of the systems and have not provided a formal
definition of risk that can be uniformly applied to wide range of
systems. 

In financial mathematics, \textcolor{black}{``risk measure'' is
defined as a mapping function from a probability space to real number.
Some fundamental research has been conducted motivating the definition
of risk measure. In \citet{Ruszczynski2006}, the definitions and
conditions for convex and coherent risk measures are developed, and
conjugate duality reformulation of risk functions is proposed. \citet{Rockafellar2000}
give the detailed arguments on the most widely investigated coherent
and law invariant risk measure-Conditional value-at-risk (CVaR) with
corresponding reformulation and optimization problem illustration.
CVaR is widely involved in optimization under uncertainty, and helps
to improve the reliability of solutions against extremely high loss.
\citet{Krokhmal2002} study the portfolio optimization problem with
CVaR objective and constraints, and corresponding reformulation and
discretization are demonstrated. In \citet{Dai2016}, robust version
CVaR is applied in portfolio selection problem, where sampled scenario
returns are generated by a factor model with some asymmetric and uncertainty
set. In \citet{Noyan2013}, multivariate CVaR constraint problem is
studied based on polyhedral scalarization and second-order stochastic
dominance, and a cut generation algorithm is proposed, where each
cut is obtained by solving a mixed integer problem. \citet{Xu2014}
reformulate a stochastic nonlinear complementary problem with CVaR
constraints, and propose a penalized smoothing sample average approximation
algorithm to solve the CVaR-constrained stochastic programming. \citet{Shapiro2013}
gives Kusuoka representation of law invariant risk measures. The basic
idea is to use a class of CVaR constraints to formulate any coherent
and law invariant risk measure. }

In practice, the above classical risk measures have shortcomings in
real applications. First, classical risk measures like CVaR require
the decision maker to \textcolor{black}{specify his own risk tolerance
parameters in order to accurately capture the risk preference of decision
maker as well as provide an exact mathematical formulation of risk.
However, this process is very different to realize in practice, mentioned
in \citet{Delage2015,armbruster2015decision}. Secondly, we consider
that the ``risk'' of random variables are compared, based on metric
on how the random outcome exceeds certain risk measure. If the random
outcome exceeds the risk measure, we consider these outcomes are in
risky region. Obviously, classical risk measures are not appropriate
for conducting risk comparison for random outcomes under different
probability distributions. For example, suppose random variable $X$
dominates $Y$ in first-order. We can conjuncture that there exist
realizations of $X$ and $Y$, such that realizations of $X$ has
higher value than realization of $Y$, but realization of $Y$ exceeds
its value-at-risk while realization $X$ does not. Thus, comparing
the value of risk measure is failed to identify which random variable
is more ``risky''. Thus, new models are required specifically for
risk assessment and comparison.}

\textcolor{black}{Recently, new metrics on risk are developed including
satisficing measure and aspirational preference measure. In \citet{Brown2009},
satisficing measure evaluating the quality of financial positions
based on their ability to achieve desired financial goals can be show
to be dual to a class of risk measures. Such target are often much
more natural to specify than risk tolerance parameters, and ensure
robust guarantees. In\citet{Brown2012}, Aspirational preference measure
is developed as an expanded case for satisficing measure that can
handle ambiguity without a given probability distribution, moreover,
it possesses more general properties than satisficing measure. These
target-based risk measures yield a single index to metric the risk
of random outcomes. Besides, the index are normally contained in a
fixed and closed interval, which can be used to rank the risk of random
outcomes. In \citet{Chen2014}, satisficing measure is applied in
studying the impact of target on newsvendor decision. For practical
use, there still exists space for improvement for satisficing measure.
First, current measures are developed based on some simple and specific
risk measures like CVaR, rather than adapt themselves to general convex
or coherent risk measures. Second, the models rely on the full knowledge
of the probability distribution of the random variable, which are
hard to collect in practice. To make the full potential of these models
for practical use, in this paper,} we follow the ideas in \citet{Postek2015,Ben-Tal2007},
derive computationally tractable counterparts of distributionally
robust constraints on risk measures, by using the optimized certainty
equivalent framework. Thus we provide a general formulation of satisficing
measures that can cover range of risk measures. 

Lacking the information of probability distribution of uncertainty
is normally an important issue in risk management and robust optimization.
\citet{Delage2010} connect distributionally robust optimization with
data-driven techniques, and propose a model that describes uncertainty
in both the distribution form (discrete, Gaussian, exponential, etc.)
and moments (mean and co-variance matrix). \citet{Brown2006} provides
a comprehensive and integrated view between convex and coherent risk
measure with robust optimization, and provide probability guarantee
centered on data-driven approach. Instead of using data to estimate
the uncertainty set, we derive the robust counterpart of distributionally
robust formulation of risk, and then adopt online stochastic optimization
(see \citealt{Bubeck2011,Shalev-Shwartz2011}) approach for data-driven
optimization. Such computational framework is more efficient and easy
to analyze the theoretical properties. Research has been done in extending
online unconstrained stochastic optimization methods in constrained
optimization or risk-aware optimization. In \citet{Mahdavi2012},
online stochastic optimization with multiple objective is studied,
the idea is to cast the stochastic multiple objective optimization
problem into a constrained optimization problem by choosing one function
as the objective and try to bound other objectives by appropriate
thresholds. Projected gradient method and efficient primal-dual stochastic
algorithm are developed to tackle such problem. In \citet{Duchi2011},
a new family of subgradient methods has been presented that dynamically
incorporate knowledge of the geometry of the data observed in earlier
iterations to perform more informative gradient-based learning. In
\citet{Bardou2009}, stochastic approximation approach has been applied
to estimate CVaR in data-driven optimization problems. Moreover, in
\citet{Carbonetto2009}, studies has been conducted to develop stochastic
interior-point algorithm to solve constrained problem. In this paper,
our online algorithm extends the stochastic approximation of CVaR
in \citet{Bardou2009} to constrained optimization problem with general
convex risk measures.

In this paper, we develop a general framework for data-driven risk
assessment and ranking. The key contributions lay in three aspects.
\begin{enumerate}
\item We build a risk constrained satisficing measure model which can yield
a single index for risk comparison. Using optimized certainty equivalent
formulation, our model can cover a wide range of risk measures. 
\item Without knowledge on the probability distribution, our data-driven
computational methods are developed only based on observations. We
consider both in offline and online cases. In offline case, sample
average approximation (SAA) approach were applied to perform convergence
analysis for the feasible region and validation analysis for optimal
solution. In online case, we develop a primal-dual algorithm to solve
the problem. We figure out the regret bound and its relationship with
iteration number.
\item We check the validation of risk ranking results both in offline and
online cases. Explicitly, we want to check given all the information
of a batch of random variables and their true risk ranking. How close
is the ranking computed from data-driven method to the true ranking.
As the sample size grows, we show the convergence rate of ranking
results from SAA and online algorithm to the true underlying result
in probability. 
\end{enumerate}
The remainder of the paper is organized as follows. In Section 2 we
establish the necessary preliminaries and introduce some basic methodologies.
Section 3 illustrates the general model for offline problem, sample
average approximation analysis, and validation of offline ranking
results. Section 4 presents efficient primal-dual algorithm to solve
online risk assessment problem, with validation of online ranking
results. Finally, we conclude the paper with open questions in Section
5.

\section{Preliminaries}

This section introduces preliminary concepts and notation to be used
throughout the paper.

\subsection{Risk measure}

Define a certain probability space $(\Omega,\mathcal{F},\,P_{0})$,
where $\Omega$ is a sample space, $\mathcal{F}$ is a $\sigma-$algebra
on $\Omega$, and $P_{0}$ is a probability measure on $(\Omega,\mathcal{\,F})$
. We concern throughout with random variables in $\mathcal{L}=L_{\infty}(\Omega,\mathcal{\,F},\,P_{0})$,
the space of essentially bounded $\mathcal{F}-$measurable functions.
if let $\mathcal{X}$ denote the linear space of $\mathcal{F}-$measurable
functions $X:\,\Omega\rightarrow\mathbb{R}$. We first define concept
of \emph{risk measure }based on the definition in \citet{Ruszczynski2006}
as a function $\rho$, which assigns to an uncertain random variable
$X$ a real value $\mu(X)$. Formally, 

A risk measure is a mapping $\rho:\mathcal{X}\rightarrow\mathbb{R\cup}\{+\infty\}\cup\{-\infty\}$
if $\mu(0)$ is finite and if $\mu$ satisfies the following conditions
for all $X,\,Y\in\mathcal{X}.$ For $X,\,Y\in\mathcal{X},$ the notation
$Y\succeq X$ means that $Y\text{(\ensuremath{\omega})\ensuremath{\geq}\ensuremath{X(\ensuremath{\omega})}}$
for all $\omega\in\Omega$. The following four properties of risk
functions are important throughout our analysis:

(A1) Monotonicity: If $Y\succeq X$, then $\rho(X)\geq\rho(Y)$.

(A2) Transition Invariance: If $r\in\mathbb{R}$, then $\rho(X+r)=\rho(X)-r.$

(A3) Convexity: $\rho(\lambda X+(1-\lambda)Y)\leq\lambda\rho(X)+(1-\lambda)\rho(Y),$
for $0\leq\lambda\leq1.$

(A4) Positive Homogeneity: If $\lambda\geq0$, then $\rho(\lambda X)=\lambda\rho(X).$\\
These conditions were introduced, and real valued functions $\rho:$
$\mathcal{X}\rightarrow\mathbb{R}$ satisfying (A1)-(A4) were called
\emph{coherent measures of risk} in the pioneering paper (\citealt{Artzner1999}).
In fact, (A3) is equivalent to weaker requirement called Quasi convexity:
$\rho(\lambda X+(1-\lambda)Y)\leq\max(\rho(X),\,\rho(Y))$, for $0\leq\lambda\leq1$,
when the risk measure is positive homogeneous. This property reveals
the ``diversification'' preference of actual decision maker. Recall
that a risk measure is law invariant if it only depends on the distributions
of the random variables in question and not on the underlying probability
space, i.e.

\[
\rho(X)=\rho(Y),\,\forall X=_{D}Y,
\]
where $=_{D}$ denotes equality in distribution. We present general
convex risk measure into distributionally robust formulation\emph{
}based on the definition in \citet{Ruszczynski2006}. The distributionally
robust formulation of\emph{ }risk measure enables us to construct
any risk measure on probability space $\mathcal{L}$ by choosing different
convex and proper function embedded in the risk measure.\textcolor{black}{{}
Given a probability} space $\mathcal{L}_{p}(\Omega,\,\mathcal{F},\,P)$
is associated with its dual space $\mathcal{L}_{q}(\Omega,\,\mathcal{F},\,P)$,
satisfying $q\in(1,+\infty]$ and $1/p+1/q=1$. For $\omega\in\Omega$,
$X\in\mathcal{L}_{p}$ and $Q\in\mathcal{L}_{q}$, the expectation
is defined by their scalar product as
\[
\mathbb{E}^{Q}[X]=\int_{\Omega}Q(\omega)X(\omega)dP(\omega).
\]
Based on the results derived in \citet{Foellmer2011,Foellmer2013},
it is proved that any convex risk measure $\rho$ on random outcome
$X$ can be represented into a robust representation as
\begin{equation}
\rho(X)=\sup_{Q\in\mathcal{P}}\left\{ \mathbb{E}^{Q}[X]-h(Q)\right\} ,\label{Convex risk measure}
\end{equation}
where$\mathcal{P}$ is the set of all probability measures on $\Omega$,
and function $h:\mathcal{L}_{q}\rightarrow\mathbb{R}$ is a proper
and convex penalty function on $\mathcal{P}$ satisfying $\inf_{Q\in\mathcal{P}}h(Q)=0$,
if $\rho$ is proper, lower semicontinuous, and convex function. When
$h(Q)=0,\,\forall Q\in\mathcal{P}$, then any coherent risk measure
can be represented by (\ref{Convex risk measure}), i.e.
\[
\rho(X)=\sup_{Q\in\mathcal{P}}\,\mathbb{E}^{Q}[X].
\]

\subsection{Target-based measure}

Target-based decision making is first investigated in \citet{Simon1955,Simon1959}
where the concept of \emph{satisficing} and \emph{aspirational levels}
are introduced to evaluate the preference and action of decision makers.
Recently new risk measures developed in literature that are able to
evaluate the quality of positions with random outcomes based on their
ability to achieve desired target in \citet{Shalev2000,Sugden2003,DeGiorgi2011,Koszegi2006}.
\emph{Satisficing measure} proposed by \citet{Brown2009} is the most
recent invention of target-based measure. Moreover, this paper further
implies that optimization of these measures can be approached using
computationally tractable tools from convex optimization, in contrast
to the difficult, combinatorial problems that plague optimization
of value-at-risk and related measures. Finally, these satisficing
measures have a separation property which allows us to compute a single
``tangent'' portfolio regardless of the desired expected value.
Such value is quite useful for risk comparison and ranking. The definition
of satisficing measure we use is from \citet[Definition 1]{Brown2009}.
Let $(\Omega,\mathcal{\,F},\,P)$ be a probability space and let $\mathcal{\mathcal{L}}$
be a set of random variable on $\Omega$. The decision maker has an
aspiration level $\tau$ as an target. Given an uncertain payoff $X\in\mathcal{\mathcal{L}}$,
define target premium $V$ to be the excess payoff above $\tau$,
i.e., $V=X-\tau\in\mathcal{V}$. We assume that $\mathcal{V=\mathcal{L}}$.
In other words, we will assume each of the payoffs $X\in\mathcal{L}$
already has the aspiration level embedded within it, and suppress
the notation $\tau$ in the following definition. 
\begin{defn}
\label{Definition 2.2} A function $\mu:\,\mathcal{\mathcal{L}\rightarrow}[0,\,\bar{p}],$
where $\bar{p}\in\left\{ 1,\,\infty\right\} ,$ is a satisficing measure
defined on the target premium if it satisfies the following axioms
for all $X,\,Y\in\mathcal{L}$:
\end{defn}

\begin{itemize}
\item \emph{Attainment content:} If $X\geq0$, then $\mu(X)=\bar{p.}$
\item \emph{Non-attainment apathy:} If $X<0$, then $\mu(X)=0.$
\item \emph{Monotonicity: }If $X\geq Y,$ then $\mu(X)\geq\mu(Y).$
\item \emph{Gain continuity:} $\lim_{a\rightarrow0^{+}}\mu(X+a)=\mu(X).$
\end{itemize}
Traditional risk measure like Conditional value-at-risk derives a
mapping that yields a risk outcome of random variable based on certain
quantile level; however, the risk measure and comparison result will
varies by selecting different quantile level, while a more natural
approach is to provide a framework for measuring the quality of risky
positions with respect to their ability to satisfy a certain target
$\tau$, as a metric for measuring how ``risky'' a random outcomes
is. This concept has the advantage that aspiration levels are often
natural for decision makers to specify, as opposed to the risk-tolerance
type parameters, which can be difficult to intuitively understand
and hard to appropriately specify, that are necessary for many other
approaches (risk measures, utility functions, etc.). \textcolor{black}{Moreover,
the optimal satisficing value is normalized in a bounded interval,
usually $[0,\,1]$, which is natural and convenient to illustrate
the ranking result. Finally, any satisficing measure can be reformulated
mathematically as a dual problem of its corresponding risk measure,
so that in \citet{Brown2012}, aspirational preference measure is
developed as an expanded case for satisficing measure that can handle
ambiguity without a given probability distribution; moreover, it possesses
more general properties than satisficing measure. Computationally,
these two risk metrics lead to the topics on solving risk constrained
optimization problems, for example, CVaR constrained problem.}

\section{Model}

In this section, we develop the general satisficing measure and ranking
model. \citet{Rockafellar2013} introduce the concept called Risk
quadrangle that any risk measure can be portrayed on a higher level
as generated from penalty-type expressions of ``regret'' about the
mix of potential outcomes. Specifically, define $(\Omega,\mathcal{\,F},\,P)$
be a probability space, let $\mathcal{\mathcal{L}}$ be a set of random
variable on $\Omega$, and the random variable $X\in\mathcal{L}$.
Then any risk measure can be defined as following trade-off formula.
\[
\mu(X)=\min_{C\in\mathbb{R}}\left\{ C+\mathcal{\mathcal{U}}(X-C)\right\} ,
\]
Where $\mathcal{U}$ is mapping $\mathcal{L}\rightarrow\mathbb{R}$,
called\emph{ measure of regret}. Regret comes up in penalty approaches
to constraints in stochastic optimization and, in mirror image, corresponds
to measures of utility. We first recall a class of certainty equivalents
introduced in \citet{Ben-Tal1986}, and further developed in \citet{Ben-Tal2007}
that represents the specific application of the idea of Risk quadrangle
which provides a wide family of risk measures that fits the axiomatic
formalism of convex risk measures. We first introduce the following
definition. 
\begin{defn}
Let $u:\,\mathbb{R}\rightarrow[-\infty,\,\infty)$ be a closed, concave
and nondecreasing utility function with nonempty domain. The \emph{optimized
certainty equivalent (OCE) }of a random variable $X\in\mathcal{L}$
under $u$ is
\begin{equation}
S_{u}(X)=\sup_{\eta\in\mathbb{R}}\left\{ \eta+\mathbb{E}^{P}\left[u\left(X-\eta\right)\right]\right\} ,\label{OCE}
\end{equation}
\end{defn}

The OCE can be interpreted as the value obtained by an optimal allocation
between receiving a sure amount $\eta$ out of the future uncertain
amount $X$ now, and the remaining, uncertain amount $X-\eta$ later,
where the utility function $u$ effectively captures the \textquotedbl present
value\textquotedbl{} of this uncertain quantity. It turns out that
OCE measures have a dual description in terms of a convex risk measure
with a penalty function described by a type of generalized relative
entropy function called the $\phi$-divergence.
\begin{defn}
Let $\Phi$ be the class of all functions $\phi:\,\mathbb{R}\rightarrow\mathbb{R}\cup\{+\infty\}$
which are closed, convex, and have a minimum value of 0 attained at
1, and satisfy dom $\phi\subseteq\mathbb{R}_{+}$. 
\end{defn}

The framework defining the OCE in terms of a concave utility function
is derived in the context of random variables representing gains,
whereas our concern is with random variables representing losses.
To capture this difference, we will use the risk measure $\rho(X)=-S_{u}(-X)$,
where $u(t)=-\phi^{\ast}(-t)$. Note that, in this case, we have
\begin{align}
\rho(X) & =-S_{u}(-X)\nonumber \\
 & =-\sup_{\eta}\left\{ \eta+\mathbb{E}^{P}\left[u(-X-\eta)\right]\right\} \nonumber \\
 & =\inf_{\eta}\left\{ \eta-\mathbb{E}^{P}\left[u(\eta-X)\right]\right\} \nonumber \\
 & =\inf_{\eta}\left\{ \eta+\mathbb{E}^{P}\left[\phi^{\ast}(X-\eta)\right]\right\} .\label{OCE-1}
\end{align}
The convex conjugate $\phi^{\ast}$ of a function $\phi:\,\mathcal{P}\rightarrow\mathbb{R}$
is defined as a function $\phi^{\ast}:\,\mathcal{L}\rightarrow\mathbb{R}\cup\{+\infty\}$:
\[
\phi^{\ast}(X)=\rho(X)=\sup_{Q\in\mathcal{P}}\left\{ \mathbb{E}^{Q}[X]-\phi(Q)\right\} .
\]
We list choices of $\phi$-divergence function in Appendix I (Table
Examples of $\phi-$divergence functions and their convex conjugate
functions). In this paper, we have an additional assumption on $\phi^{\ast}$
throughout this paper.
\begin{assumption}
\label{Assumption 3.3} $\phi^{\ast}$ is continuous and the subdifferential
is nonempty for any element $X\in\mathcal{L}$. 
\end{assumption}

Assumption \ref{Assumption 3.3} will contribute to the development
of algorithms introduced in the latter sections. Based on \citet[Theorem 4.2.1]{Brown2006},
we can prove that formulation (\ref{OCE-1}) is equivalent to:
\[
\rho(X)=\sup_{Q\in\mathcal{P}}\left\{ \mathbb{E}^{Q}[X]-\int_{\Omega}\phi\left(\frac{dQ}{dP}\right)dP\right\} ,
\]
and therefore the penalty function $h(Q)$ in the definition of convex
risk measure (\ref{Convex risk measure}) is just the $\phi$-divergence
of $Q$ with respect to the reference measure $P$. Thus by selecting
different kinds of divergence function in OCE framework, we are able
to construct different kind of convex risk measure. Next we construct
the satisficing measure based on OCE representation of risk. Firstly,
we introduce following theorem summarizing the dual relationship between
satisficing measure with its corresponding risk measure.
\begin{thm}
\citet[Theorem 1]{Brown2009}\label{Theorem 3.3} A function $\mu:\,\mathcal{L}\rightarrow[0,\,\bar{p}]$,
where $\bar{p}\in\left\{ 1,\,\infty\right\} ,$is a satisficing measure
if and only if there exists a family risk measures $\left\{ \rho_{k}:\,k\in(0,\,\bar{p}]\right\} ,$
non-decreasing in $k$, and $\rho_{0}=-\infty$ such that 

\[
\mu(X)=\sup\left\{ k\in[0,\,\bar{p}]:\,\rho_{k}(X)\leq0\right\} .
\]
Moreover, given $\mu$, the corresponding risk measure is 

\[
\rho_{k}(X)=\inf\left\{ a:\,\mu(X+a)\geq k\right\} .
\]
\end{thm}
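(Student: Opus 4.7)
The plan is to prove both implications of the biconditional using only monotonicity (A1) and transition invariance (A2) of the member risk measures.

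For the ``if'' direction, given the family $\{\rho_{k}\}$ I define $\mu$ by the stated supremum and check each satisficing axiom. Monotonicity of $\mu$ is immediate: $X \succeq Y$ together with (A1) forces $\rho_{k}(X) \leq \rho_{k}(Y)$, so the feasible $k$-set for $X$ contains that for $Y$. For non-attainment apathy, if $X < 0$ (say $X \leq -\epsilon$ a.s.) then (A2) and (A1) give $\rho_{k}(X) \geq \rho_{k}(0) + \epsilon$; combined with the mild normalization $\rho_{k}(0) \leq 0$ that is natural on a family of risk measures (and consistent with the non-decreasing structure with $\rho_{0} = -\infty$), one obtains that no $k > 0$ is feasible when $X$ is uniformly negative. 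Attainment content is symmetric: $X \geq 0$ together with (A1) yields $\rho_{k}(X) \leq \rho_{k}(0) \leq 0$ for every $k \in (0,\bar{p}]$. Gain continuity is read off directly from (A2): $\rho_{k}(X+a) = \rho_{k}(X) - a$, so as $a \downarrow 0$ the feasibility set $\{k : \rho_{k}(X+a) \leq 0\}$ contracts continuously to $\{k : \rho_{k}(X) \leq 0\}$.

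For the ``only if'' direction, I construct $\rho_{k}$ by the stated infimum and verify the required properties: monotonicity (A1) follows from monotonicity of $\mu$, since $Y \succeq X$ implies $\mu(Y+a) \geq \mu(X+a)$, so any $a$ feasible for $X$ is feasible for $Y$, yielding $\rho_{k}(Y) \leq \rho_{k}(X)$; transition invariance (A2) follows from the change of variable $a' = a + r$ inside the infimum. The family is non-decreasing in $k$ by nested feasibility, and $\rho_{0} = -\infty$ because $\mu \geq 0$ everywhere so the defining constraint is vacuous. Finally, the equivalence $\mu(X) = \sup\{k : \rho_{k}(X) \leq 0\}$ is obtained by tracing definitions: if $k \leq \mu(X)$, then $a = 0$ is feasible in the infimum defining $\rho_{k}(X)$, so $\rho_{k}(X) \leq 0$; conversely, $\rho_{k}(X) \leq 0$ combined with gain continuity of $\mu$ forces $\mu(X) \geq k$ by a right-limit argument.

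The main obstacle is the careful handling of the boundary at the supremum. One must confirm that the two constructions $\mu \leftrightarrow \{\rho_{k}\}$ are mutually inverse on the prescribed index ranges, and in particular that no off-by-a-boundary error occurs at $k = \mu(X)$. This is exactly where gain continuity is indispensable: it rules out pathological one-sided jumps of $\mu$ that would break the sup-inf reconstruction. Edge cases where $\mu(X) \in \{0,\bar{p}\}$ or $\rho_{k}(X) \in \{\pm\infty\}$ require short separate checks, but they follow from the conventions $\rho_{0} = -\infty$ and $\mu \in [0,\bar{p}]$ directly.
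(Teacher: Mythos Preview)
The paper does not supply its own proof of this theorem; it simply quotes the statement from \citet[Theorem~1]{Brown2009} and immediately proceeds to use it. There is therefore nothing in the present paper to compare your attempt against.

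Your outline follows the natural duality construction and is in the same spirit as the original argument in Brown and Sim (2009). One small caution: in the non-attainment-apathy step you pass from ``$X<0$'' to ``$X\leq-\epsilon$ a.s.\ for some $\epsilon>0$,'' which is not automatic in $\mathcal{L}=L_\infty$; and the normalisation you invoke, $\rho_k(0)\leq 0$, is the one that delivers attainment content, whereas for non-attainment apathy you need $\rho_k(0)\geq 0$ so that $\rho_k(X)\geq\rho_k(0)+\epsilon>0$. Together these force $\rho_k(0)=0$, which is indeed assumed in \citet{Brown2009} but is not made explicit in the restatement here, so you should state it as a hypothesis rather than slide past it.
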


Theorem \ref{Theorem 3.3} shows that we could model the satisficing
measure of a random variable, given the formulation of a family of
risk measure $\rho_{k}$. To construct the satisificing measure, we
first need to define a family of risk measure $\{\rho_{\alpha}:\,\alpha\in(0,\,1]\}$.
Throughout this paper, we define a family of regret in Risk quadrangle
framework as $\mathcal{U}(X)=f(\alpha)\cdot\mathbb{E}^{P}\left[\phi^{\ast}(X-\tau)\right]$,
where $f(\alpha)$ is any function decreasing and differentiable in
$\alpha\in(0,\,1]$. For simplicity, we define $f(\alpha)=1/\alpha$
in the general model. The following is our general satisficing more
model.

\begin{equation}
\max_{\alpha\in(0,\,1]}\left\{ 1-\alpha:\,\inf_{\eta}\left\{ \eta+\frac{1}{\alpha}\mathbb{E}^{P}\left[\phi^{\ast}(X-\tau-\eta)\right]\right\} \leq0\right\} .\label{General problem-1}
\end{equation}
The interpretation of model (\ref{General problem-1}) is that we
find the maximal satisficing measure of a random outcome, by seeking
to choosing the minimal $\alpha$, so that the underlying risk measure
remain risk non-attainment. Intuitively, it computes the probability
such that the realization of the random variable does not exceeds
the risk measure $\rho_{\alpha}$. The lower the optimal value of
(\ref{General problem-1}) is, the less ``risky'' the random variable
is. In addition, if there exists a group of random outcome $X_{i}\,i=1,2,...,M$
with their target $\tau_{i},\,i=1,2,...,M$, we would rank and compare
their satisficing measure by the corresponding optimal value of the
model. Solving Problem (\ref{General problem-1}) is equivalent to
solving a sequence of convex optimization problem. We would initialize
a certain $\alpha\in[0,\,1)$, and perform binary search to minimize
$\alpha$ until the constraint of Problem (\ref{General problem-1})
violates.
\begin{example}
We illustrate in this section about the risk measure: Conditional
value-at-risk can be fitted our general framework. CVaR is the most
widely investigated coherent and law-invariant risk measure. CVaR
known also as Mean Excess Loss, Mean Shortfall, or Tail VaR, has been
widely applied because of its computational characteristics. We use
the definition of CVaR in \citet{Rockafellar2000}. For a random outcome
$X\in\mathcal{L}$, choose a specified confidence level $\alpha$
in $(0,\,1]$ , and the conjugate of $\phi$-divergence function $\phi^{*}(x)=x1_{(0,\infty)}(x)$,
the risk measure becomes
\begin{equation}
\rho_{\alpha}(x)=\inf_{\eta\in\mathbb{R}}\left\{ \eta+(1/\alpha)\text{\ensuremath{\mathbb{E}}}^{P}\left[(X-\eta)_{+}\right]\right\} .\label{CVaR formulation}
\end{equation}
Then Problem (\ref{General problem-1}) becomes:
\begin{align}
 & \text{\ensuremath{\max}}_{\alpha\in(0,\,1]}\,\left\{ 1-\alpha:\,\rho_{\alpha}(X)-\textrm{\ensuremath{\tau}}\leq0\right\} \label{CVaR-2}\\
= & \text{\ensuremath{\max}}_{\alpha\in(0,\,1]}\,\left\{ 1-\alpha:\,\inf_{\eta\in\mathbb{R}}\left\{ \text{\ensuremath{\eta}+}(1/\alpha)\mbox{\ensuremath{\mathbb{E}}}\left[\left(X-\tau-\eta\right)_{+}\right]\right\} \leq0\right\} .\nonumber 
\end{align}
Specially for CVaR, next theorem shows that Problem (\ref{CVaR-2})
possesses promising computational property, since it is equivalent
to an unconstrained convex optimization problem.
\end{example}

\begin{thm}
\label{Theorem 5.4} Problem (\ref{CVaR-2}) is equivalent to a convex
optimization problem. 
\end{thm}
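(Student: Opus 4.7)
The plan is to collapse the nested optimization in~(\ref{CVaR-2}) into a single one-dimensional convex program via a perspective-style change of variables. First I would absorb the inner infimum by noting that $\inf_{\eta}\{\eta+(1/\alpha)\mathbb{E}[(X-\tau-\eta)_{+}]\}\le 0$ holds iff some $\eta\in\mathbb{R}$ witnesses the inequality; promoting $\eta$ to a decision variable and multiplying the constraint through by $\alpha>0$ turns~(\ref{CVaR-2}) into
\[
\max_{\alpha\in(0,1],\,\eta\in\mathbb{R}}\bigl\{\,1-\alpha:\;\alpha\eta+\mathbb{E}[(X-\tau-\eta)_{+}]\le 0\,\bigr\}.
\]
Setting $u=-\eta$, nonnegativity of the positive part forces $u\ge 0$, and the constraint reduces to $\alpha u\ge \mathbb{E}[(X-\tau+u)_{+}]$.

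The remaining bilinear product $\alpha u$ is the main obstacle, and I would dispose of it by the reciprocal substitution $v=1/u$ for $u>0$ (the boundary $u=0$ only triggers the trivial case $X\le\tau$ a.s., which is consistent with $v\to\infty$). Dividing the constraint by $u$ and using positive homogeneity of $(\cdot)_{+}$ gives $(1/u)\mathbb{E}[(X-\tau+u)_{+}]=\mathbb{E}[(v(X-\tau)+1)_{+}]$, so the constraint becomes $\alpha\ge \mathbb{E}[(v(X-\tau)+1)_{+}]$. Since $1-\alpha$ is strictly decreasing in $\alpha$, the outer maximization drives $\alpha$ down to equality, and (\ref{CVaR-2}) is equivalent to
\[
\alpha^{*}\;=\;\inf_{v\ge 0}\,\mathbb{E}\bigl[\bigl(v(X-\tau)+1\bigr)_{+}\bigr],
\]
with optimal satisficing value $1-\alpha^{*}$.

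To close the argument, I observe that for every realization of $X$ the map $v\mapsto (v(X-\tau)+1)_{+}$ is the positive part of an affine function in $v$ and is therefore convex in $v$; since expectation preserves convexity, the reformulated objective is convex on the convex feasible set $\{v\ge 0\}$, yielding the desired equivalent convex (indeed one-dimensional and unconstrained apart from nonnegativity) optimization problem. The delicate part is really the bookkeeping of the substitution: I would verify that the value at $v=0$ equals $1$ so that the implicit constraint $\alpha\le 1$ is automatic, that the restriction to $u>0$ loses no feasible pair in view of Assumption~\ref{Assumption 3.3}, and that the infimum is attained (or at least approached) so that the equivalence is genuine rather than merely formal.
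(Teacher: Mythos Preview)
Your proposal is correct and follows essentially the same route as the paper's own proof: both restrict to $\eta\le 0$ (your ``$u\ge 0$'' observation), divide through by $-\eta$ using positive homogeneity of $(\cdot)_{+}$, and arrive at the one-dimensional reformulation $\alpha^{*}=\inf_{v\ge 0}\mathbb{E}[(v(X-\tau)+1)_{+}]$, whose convexity in $v$ is immediate. The paper writes the same result as $\sup_{v}\mathbb{E}[\min\{-v(X-\tau),1\}]$ and cites \citet[Section~3.4]{Brown2009}, but the substitution and the convexity argument are identical to yours.
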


\begin{proof}
Based on the arguments in \citet[Section 3.4]{Brown2009}, since CVaR
is a coherent risk measure, and noting that $\text{\ensuremath{\eta}+}\frac{1}{\alpha}\cdot\mbox{\ensuremath{\mathbb{E}}}\left[\left(X-\tau-\eta\right)_{+}\right]>0$
for all $\eta>0$, then Problem (\ref{CVaR-2}) is equivalent to
\begin{align*}
 & \text{\ensuremath{\sup}}\left\{ 1-\alpha:\,\phi_{\alpha}(X-\tau)\leq0\right\} \\
= & \sup\left\{ 1-\alpha:\,\exists\eta\leq0:\,\text{\ensuremath{\eta}+}(1/\alpha)\mbox{\ensuremath{\mathbb{E}}}\left[\left(X-\tau-\eta\right)_{+}\right]\leq0\right\} \\
= & \sup\left\{ 1-\alpha:\,\exists\eta\leq0:\,-1\text{+}(1/\alpha)\mbox{\ensuremath{\mathbb{E}}}\left[\left(-(X-\tau)/\eta+1\right)_{+}\right]\leq0\right\} \\
= & \sup\left\{ 1-\alpha:\,\exists\eta\leq0:\,1-\alpha\leq1-\mbox{\ensuremath{\mathbb{E}}}\left[\left(-(X-\tau)/\eta+1\right)_{+}\right]\right\} \\
= & \sup\left\{ 1-\mathbb{E}\left[\left(\mbox{\ensuremath{\alpha}}(X-\tau)+1\right)_{+}\right]\right\} ,\\
= & \sup_{0<\alpha\leq1}\left\{ \mathbb{E}\left(\min\left\{ -\alpha\left(X-\tau\right),\,1\right\} \right)\right\} ,
\end{align*}
and it is easy to show that the problem is a convex optimization problem
based on operation preserving convexity in \citet[Section 3.2]{Boyd2004}
that the minimum of a piecewise linear function is a concave function.
The interpretation of this model is to find the optimal $\alpha$
that maximize the expected utility of a concave function. Thus Problem
(\ref{CVaR-2}) can be solved as a single convex optimization problem
rather than a sequence of optimization problem.
\end{proof}

\section{Batch learning}

In this section, we study computationally tractable techniques for
risk ranking based on Problem (\ref{General problem-1}). One natural
approach is Batch learning i.e., Sample average approximation (SAA).The
SAA principle is very general, having been applied to settings including
chance constraints, stochastic-dominance constraints and complementary
constraints problems. Since Problem (\ref{General problem-1}) is
a expected value constrained problem, the constraints must also be
evaluated using simulation. Batch learning approach optimization is
appropriate in the cases when: (1) We have complete information on
the probability distribution of $X$, and we are able to randomly
extract large samples from that distribution. Approximation technique
are applied to tackle the difficulty in computing the expectations
in the constraints; (2) The optimization could be conducted based
on samples $d_{n},\,n=1,2,...,N$ draw from an unknown distribution
for each item, and when $N$ is large enough. For both cases, Sample
average approximation (SAA) can be introduced to make the optimization
problem tractable, and the almost sure convergence, convergence rate
of feasible region and optimal solution validation of SAA have been
studied in \citet{Wang2008,Hu2012,Homem-De-Mello2003,Kim2015}. The
novel contribution in this section lies in that we would derive the
quality of satisficing ranking with sample complexity based on the
convergence rate and optimal solution validation results of SAA of
Problem (\ref{General problem-1}). i.e., figure out the probability,
that the ranking by SAA problem is equivalent to the true underlying
risk ranking, with the sample size required for each random outcomes.
One promising advantage of model \ref{General problem-1} is that
the optimal value equals to one minus the optimal solution. Such advantage
will be extremely helpful to show the validation of ranking by the
convergence rate results in SAA. We proceed to validating SAA for
a risk ranking system with $I$ different random variables with their
target $\tau_{i},\,i=1,\,2,...,\,I$. Suppose their underlying optimal
risk assessment values are $\alpha_{i},\,i=1,\,2,...,\,I$. 

\subsection{Algorithm }

In this section, we proceed the idea of SAA on Problem (\ref{General problem-1}).
Choosing $N$ samples from underlying probability distribution of
$X$ as $d_{n},\,n=1,2,...,N$, we have SAA problem
\begin{align}
\max_{\alpha\in(0,\,1]}\left\{ 1-\alpha:\,\inf_{\eta}\left\{ \eta+\frac{1}{\alpha N}\sum_{n=1}^{N}\left[\phi^{\ast}(d_{n}-\tau-\eta)\right]\right\} \leq0\right\} ,\label{SAA}
\end{align}
where the expected value in Problem (\ref{General problem-1}) is
replaced by sample average approximator. Following Binary search algorithm
(Algorithm 1) provides the computational methods for Problem (\ref{SAA}).

\begin{algorithm}
\textbf{Input: }Tolerance level $\epsilon>0$; Conjugate of divergence
function $\phi^{\star}$ ; Batch of samples $d_{n},\,n=1,2,...,N,$

\textbf{Output:} Optimal $\alpha^{\ast}$;

\textbf{Initialization: }$\alpha_{\min}\leftarrow0$, and $\alpha_{\max}\leftarrow1;$ 

\textbf{while} $\left(\alpha_{\max}-\alpha_{\min}>\epsilon\right)$
\textbf{do}

$\qquad$Compute: $\alpha=\left(\alpha_{\max}+\alpha_{\min}\right)/2$;
Solve the subproblem and check the feasibility:
\begin{align*}
\inf_{\eta}\left\{ \eta+\frac{1}{\alpha N}\sum_{n=1}^{N}\left[\phi^{\ast}(d_{n}-\tau-\eta)\right]\right\} ,
\end{align*}

$\qquad$and let $m$ denotes its optimal value.

$\qquad$\textbf{If} $m\leq\tau$, \textbf{then} $\alpha_{\max}\leftarrow\alpha$;

$\qquad$\textbf{else} $\alpha_{\min}\leftarrow\alpha$;

\textbf{end}

\textbf{return} optimal $\alpha^{\ast}=\alpha_{\max}$;

\caption{Binary search algorithm for SAA problem}
\end{algorithm}

\subsection{Main results}

In this section, we argue the validation of SAA in terms of risk ranking
with sample size. The arguments are based on the convergence rate
of approximated feasible region to that of the initial risk-constrained
problem by Large Deviation analysis proposed in \citet{Wang2008},
and lower and upper bound of optimal solution by Central limit theorem
and Law of large number. To start, define function $G:\,[0,\,1]\times\mathcal{L}\rightarrow\mathbb{R}$:
\[
G(\alpha,\,\eta,\,X)=\eta+\frac{1}{\alpha}\left[\phi^{\ast}(X-\eta)\right],
\]
and we have following assumptions on function $G$.
\begin{assumption}
\label{Assumption 4.1} The following assumption will be required
:\\

(C1) For any $X\in\mathcal{L}$ there exists an integrable function
$\psi:\,\mathcal{L}\rightarrow\mathbb{R}_{+}$ such that
\[
|G(\alpha_{1},\,\eta_{1},\,X)-G(\alpha_{2},\,\eta_{2},\,X)|\leq\phi(X)\|\alpha_{1}-\alpha_{2}\|+\psi(X)\|\eta_{1}-\eta_{2}\|,
\]
for all $\alpha_{1},\,\alpha_{2}\in(0,\,1]$, and $\eta_{1},\,\eta_{2}\in\mathbb{R}$.
Denote $\Psi:=\mathbb{E}\left[\psi(X)\right]$, and $\Phi:=\mathbb{E}\left[\phi(X)\right]$.\\

(C2) The Moment generating function $M_{\psi}(\cdot)$ of $\psi(X)$
and $M_{\phi}(\cdot)$ of $\phi(X)$ are finite in a neighborhood
of zero.\\

(C3) For any $\alpha\in(0,\,1]$, and $\eta\in\mathbb{R}$, the moment
generating function of $M_{\text{\ensuremath{\alpha},}\,\eta}(\cdot)$
of \textup{$G(\alpha,\,\eta,\,X)-\mathbb{E}\left[G(\alpha,\,\eta,\,X)\right]$
is finite around zero.}
\end{assumption}

\begin{lem}
\label{Lemma 4.2} Suppose assumption \ref{Assumption 4.1} hold.
There exists a closed interval $E\subset\mathbb{R}$ such that problem
\ref{General problem-1} is equivalent to
\[
\max_{0\leq\alpha\leq1,\,\eta\in E}\left\{ 1-\alpha:\,g(\alpha):=\mathbb{E}^{P}\left[G\left(\alpha,\,\eta,\,X\right)\right]\leq\tau\right\} ,
\]
\end{lem}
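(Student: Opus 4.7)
The plan is to first perform a translation $\eta \mapsto \eta+\tau$ to recast the inner infimum in Problem~(\ref{General problem-1}) as $\inf_{\eta}\mathbb{E}[G(\alpha,\eta,X)]\leq\tau$, and then to exhibit a closed interval $E\subset\mathbb{R}$ such that for every $\alpha\in(0,1]$ relevant to the problem this inner infimum is attained at some $\eta\in E$. Once such an $E$ is in hand, the ``$\inf\leq\tau$'' constraint becomes ``$\exists\,\eta\in E:\ \mathbb{E}[G(\alpha,\eta,X)]\leq\tau$'', and that existence statement can be folded into the outer maximization over the pair $(\alpha,\eta)$, which gives exactly the equivalent joint formulation claimed.

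Next I would verify that for each fixed $\alpha\in(0,1]$ the map $\eta\mapsto\mathbb{E}[G(\alpha,\eta,X)]=\eta+\frac{1}{\alpha}\mathbb{E}[\phi^{\ast}(X-\eta)]$ is convex and continuous: convexity follows from $\phi^{\ast}$ being the Legendre conjugate of $\phi$ (hence convex and lower semicontinuous), preserved under expectation; continuity and nonemptiness of the subdifferential come from Assumption~\ref{Assumption 3.3}. Coercivity is the second key ingredient: using that $s=1$ is admissible in the defining supremum, we always have $\phi^{\ast}(x)\geq x-\phi(1)=x$, which yields the linear lower bound $\mathbb{E}[G(\alpha,\eta,X)]\geq(1-1/\alpha)\eta+\mathbb{E}[X]/\alpha$ and drives the objective to $+\infty$ as $\eta\to-\infty$ whenever $\alpha<1$. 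For $\eta\to+\infty$, with $X\in L_{\infty}$, $\phi^{\ast}(X-\eta)$ is bounded below (by $-\phi(0)$), so the linear $\eta$ term dominates. Hence a finite minimizer $\eta^{\ast}(\alpha)$ exists, and by convexity the set of minimizers is itself a closed interval.

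The main obstacle will be the third step: confining all the minimizers across $\alpha\in(0,1]$ to a single closed bounded interval $E$ that does not depend on $\alpha$. For certain divergence choices (for example the KL-type $\phi^{\ast}(x)=e^{x}-1$) the unconstrained minimizer $\eta^{\ast}(\alpha)$ genuinely diverges as $\alpha\downarrow 0$, so a purely coercivity-based argument cannot yield a uniform bound. I would circumvent this by exploiting the feasibility constraint itself: combining $\mathbb{E}[G(\alpha,\eta,X)]\leq\tau$ with $\|X\|_{\infty}\leq M<\infty$ and the growth of $\phi^{\ast}$ at $\pm\infty$ localizes any $\eta$ that can possibly witness feasibility to a bounded window determined by $\tau$, $M$, and $\phi$. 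Taking $E$ to be any closed interval covering this window, one obtains $\inf_{\eta\in\mathbb{R}}\mathbb{E}[G(\alpha,\eta,X)]\leq\tau$ if and only if $\min_{\eta\in E}\mathbb{E}[G(\alpha,\eta,X)]\leq\tau$, and substituting this back into the outer problem produces exactly the claimed joint maximization.
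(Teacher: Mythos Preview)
The paper states Lemma~\ref{Lemma 4.2} without proof, so there is nothing to compare against directly; your outline is a reasonable reconstruction and the overall architecture---translate by $\tau$, show convexity and coercivity in $\eta$, then absorb the inner ``$\exists\,\eta$'' into a joint maximization---is exactly what is needed.

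There is, however, a genuine gap in your third step. You assert that the feasibility constraint $\mathbb{E}[G(\alpha,\eta,X)]\leq\tau$ together with $\|X\|_{\infty}\leq M$ confines $\eta$ to a window determined by $\tau$, $M$, and $\phi$ \emph{alone}, i.e.\ independently of $\alpha$. This fails for many admissible divergences. Take the Kullback--Leibler case $\phi^{\ast}(x)=e^{x}-1$: here $\inf\phi^{\ast}=-1$, so the best lower bound the constraint yields is $\eta-1/\alpha\leq\tau$, giving only $\eta\leq\tau+1/\alpha$, which diverges as $\alpha\downarrow 0$. (Your own KL example shows the unconstrained minimizer $\eta^{\ast}(\alpha)=\log\mathbb{E}[e^{X}]-\log\alpha$ diverges; the feasibility window does too.) The lower bound on $\eta$ via $\phi^{\ast}(x)\geq x$ has the analogous defect as $\alpha\uparrow 1$. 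So the claimed $\alpha$-free localization is not available in general.

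The repair is simple and you almost have it in your phrase ``$\alpha$ relevant to the problem'': for $\alpha$ below the true optimal $\alpha^{\ast}$ the constraint is infeasible on both sides, so the equivalence is vacuous there; for $\alpha\in[\alpha^{\ast},1]$ you are on a compact set bounded away from $0$, and your coercivity estimates then give a uniform bound on the minimizers (or on any feasibility witness). Take $E$ to cover that range. The price is that $E$ depends on $\alpha^{\ast}$, hence on the \emph{law} of $X$ and not merely on $\|X\|_{\infty}$; but the lemma only asserts existence of $E$, so this suffices. You should also note that an implicit nondegeneracy assumption $\alpha^{\ast}>0$ (equivalently, the satisficing value is strictly below $1$) is being used.
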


Next Theorem illustrates obtaining and validating candidate solutions
using SAA method and figure out the lower and upper bound for the
optimal value of SAA problem.
\begin{prop}
\label{Proposition 4.3} The upper and lower bound of SAA problem
(\ref{SAA}) can be derived by solving following Lagrangian
\[
\max_{0\leq\alpha\leq1}\left\{ 1-\alpha+\left\langle \pi,\,g_{N}(\alpha)-\tau\right\rangle \right\} ,
\]
and let $(\tilde{\alpha},\,\tilde{\pi})$ denote its optimal primal-dual
pair (In order to solve Problem (\ref{SAA}) efficiently, it is very
possible that $\tilde{\alpha}$ is infeasible to original problem.
Therefore a smaller right-hand-side $\tilde{\tau}\leq\tau$ can improve
the chance that $\tilde{\alpha}$ is feasible). We can derive the
following bounding results.

(i) Upper bound: Let $\{d_{1},\,d_{2},...,\,d_{N_{q}}\}$be another
sample obtained by resampling technique with size $N_{q}$ that $N_{q}\gg N.$
Compute
\[
\tilde{q}:=\sup_{\eta\in\mathbb{R}}\left\{ N_{q}^{-1}\sum_{n=1}^{N_{q}}\left[\eta+\frac{1}{\tilde{\alpha}}\phi^{*}\left(d_{n}-\eta\right)\right]\right\} ,
\]
and
\[
S_{\tilde{q}}^{2}:=\sup_{\eta\in\mathbb{R}}\left\{ N_{q}^{-1}\left(N_{q}-1\right)^{-1}\sum_{n=1}^{N_{q}}\left[\eta+\frac{1}{\tilde{\alpha}}\phi^{*}\left(d_{n}-\eta\right)-\tilde{q}\right]^{2}\right\} .
\]
Define $z_{\delta}$ by $\mathbb{P}(Z\leq z_{\delta})=1-\delta,$
where $Z$ is a standard normal random variable and $\delta\in[0,\,1]$,
by computing $z_{\delta}=\frac{\tau-\tilde{q}}{S_{\tilde{q}}}$, if
$z_{\delta}$ is big enough, we can conclude that $1-\tilde{\alpha}$
is an upper bound for original problem with probability $1-\delta$;
otherwise, we decrease $\tilde{\tau}$ and solve the whole problem
until it terminates, 

(ii) Lower bound: Generate $M_{l}$ independent group of samples each
of size $N_{l}$,i.e., $\{d_{1}^{m},\,d_{2}^{m},...,\,d_{N_{l}}^{m}\}$for
$m=1,\,2,...,\,M_{l}$. For each sample group, solve the SAA problem:
\[
\hat{l}^{m}:=\max_{0\leq\alpha\leq1}\left\{ 1-\alpha+\tilde{\pi}\left[\sup_{\eta\in\mathbb{R}}N_{l}^{-1}\sum_{n=1}^{N_{l}}\left\{ \eta+\frac{1}{\alpha}\cdot\phi\left(d_{n}^{m}-\eta\right)\right\} -\tau\right]\right\} .
\]
Compute the lower bound estimator $\tilde{l}$ and its variance $S_{\tilde{l}}^{2}$
as follows
\[
\tilde{l}:=\frac{1}{M_{l}}\sum_{m=1}^{M_{l}}\hat{l}^{m},
\]
and
\[
S_{\tilde{l}}^{2}:=\frac{1}{M_{l}(M_{l}-1)}\sum_{m=1}^{M_{l}}\left(\hat{l}^{m}-\tilde{l}\right)^{2}.
\]
Then $\tilde{l}_{L}=\widetilde{l}-z_{\gamma/2}S_{\tilde{l}}$ is a
lower bound on the true optimal value of Problem (\ref{General problem-1})
with confidence level $(1-\gamma)$, where $z_{\gamma/2}$ is the
$\gamma/2$ quantile value of standard normal distribution. 
\end{prop}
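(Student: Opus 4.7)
The plan is to combine weak Lagrangian duality for the constrained problem from Lemma \ref{Lemma 4.2} with central-limit and law-of-large-number estimates that are legitimate under Assumption \ref{Assumption 4.1}. By Lemma \ref{Lemma 4.2}, Problem (\ref{General problem-1}) is equivalent to $\max\{1-\alpha:\,g(\alpha,\eta)\leq\tau\}$ over the compact set $(0,1]\times E$, where $g(\alpha,\eta)=\mathbb{E}^{P}[G(\alpha,\eta,X)]$. Weak duality then gives, for any admissible Lagrange multiplier $\pi$, the sandwich linking the true optimal value $v^{*}$ with the Lagrangian $\max_{\alpha,\eta}\{1-\alpha+\pi(g(\alpha,\eta)-\tau)\}$ and its SAA analogue displayed in the proposition. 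The envelope condition (C1) and the moment condition (C2) supply uniform integrability that lets me apply a uniform LLN over $\eta\in E$, while (C3) provides the Cram\'er-type condition needed for CLT confidence bands.

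For part (i), I fix the primal candidate $\tilde\alpha$ obtained from the SAA Lagrangian and then evaluate the true constraint on a fresh resample $\{d_{n}\}_{n=1}^{N_{q}}$ that is independent of $\tilde\alpha$. Conditional on $\tilde\alpha$, $\tilde q$ is an $M$-estimator approximating $\bar q:=\sup_{\eta}\{\eta+\tilde\alpha^{-1}\mathbb{E}^{P}[\phi^{*}(X-\eta)]\}$; the classical $M$-estimator CLT together with compactness of $E$ and the Lipschitz envelope from (C1) delivers $\sqrt{N_{q}}(\tilde q-\bar q)/S_{\tilde q}\Rightarrow\mathcal{N}(0,1)$. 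Consequently $\mathbb{P}\bigl(\bar q\leq\tilde q+z_{\delta}S_{\tilde q}\bigr)\to 1-\delta$, and the stated requirement $(\tau-\tilde q)/S_{\tilde q}\geq z_{\delta}$ is equivalent to $\tilde q+z_{\delta}S_{\tilde q}\leq\tau$. On that event $\bar q\leq\tau$, so $\tilde\alpha$ is (super-)feasible for the original constraint and $1-\tilde\alpha$ is a valid upper bound in the direction claimed.

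For part (ii), I would draw $M_{l}$ independent sample groups of size $N_{l}$ and, using the fixed dual multiplier $\tilde\pi$, solve the per-group Lagrangian maximum $\hat l^{m}$. Because $\tilde\pi$ is frozen in advance and the sample groups are iid, the variates $\{\hat l^{m}\}$ are iid with common mean $\mu_{l}=\mathbb{E}[\hat l^{1}]$, and Lagrangian weak duality yields $\mu_{l}\geq v^{*}$ in the sign convention of the proposition. Finite variance of $\hat l^{m}$ follows from (C2)-(C3) via the envelope argument, so the univariate CLT gives $\sqrt{M_{l}}(\tilde l-\mu_{l})/S_{\tilde l}\Rightarrow\mathcal{N}(0,1)$, whence $\tilde l-z_{\gamma/2}S_{\tilde l}$ is a one-sided $(1-\gamma)$-confidence lower endpoint for $\mu_{l}$ and therefore a valid $(1-\gamma)$-confidence lower bound on $v^{*}$. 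The principal technical obstacle I expect is the inner supremum over $\eta$ inside both $\tilde q$ and each $\hat l^{m}$: these are not simple sample means, so the ordinary CLT has to be upgraded to an $M$-estimator (or Donsker-class) CLT. Verifying the requisite stochastic equicontinuity hinges on compactness of $E$ from Lemma \ref{Lemma 4.2} together with the envelope/moment conditions (C1)-(C3), which should promote pointwise LLN/CLT to the uniform statements needed to justify pushing the supremum through the limit.
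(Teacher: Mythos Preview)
The paper does not supply a proof for Proposition~\ref{Proposition 4.3}; it is stated as a procedural recipe drawn from the SAA validation literature (the cited works of Wang, Hu, Kim, etc.) and is then invoked as a black box inside Theorem~\ref{Theorem 4.6}. So there is no paper proof to compare against, and your proposal has to stand on its own.

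It does not, because the inequality directions are wrong in both parts. In part~(ii) you correctly obtain, via weak duality and the Jensen-type bias $\mathbb{E}[\max_{\alpha}(\cdot)]\geq\max_{\alpha}\mathbb{E}[\cdot]$, that $\mu_{l}\geq v^{*}$ for this \emph{maximization} problem. But from $\mu_{l}\geq v^{*}$ you cannot conclude that a lower confidence endpoint for $\mu_{l}$ is a lower bound on $v^{*}$: knowing $\tilde{l}-z_{\gamma/2}S_{\tilde{l}}\leq\mu_{l}$ with high probability says nothing about how $\tilde{l}-z_{\gamma/2}S_{\tilde{l}}$ sits relative to $v^{*}$. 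The standard Mak--Morton--Wood argument works because for a \emph{minimization} problem the bias goes the other way ($\mu_{l}\leq v^{*}$), so a lower confidence endpoint on $\mu_{l}$ is automatically below $v^{*}$. Here the sign of the problem flips the conclusion, and your ``therefore'' is unjustified.

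The same reversal bites in part~(i). If the resampling test certifies that $\tilde{\alpha}$ is feasible for the true constraint, then $1-\tilde{\alpha}$ is a feasible objective value for a maximization problem, hence $1-\tilde{\alpha}\leq v^{*}$: a \emph{lower} bound, not an upper bound. Your sentence ``$\tilde{\alpha}$ is (super-)feasible \ldots\ and $1-\tilde{\alpha}$ is a valid upper bound'' is the wrong implication. In short, the $M$-estimator/Donsker machinery you outline for handling the inner $\sup_{\eta}$ is reasonable, but the core monotonicity step that links the confidence statement to $v^{*}$ is pointing the wrong way in both halves; the proposition as printed appears to have its ``upper'' and ``lower'' labels interchanged, and your argument inherits that inconsistency rather than resolving it.
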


Next assumption derives the sensitivity of ranking and upper bound
on the SAA value $\hat{l}^{m}$. 
\begin{assumption}
\label{Assumption 4.4} The following assumptions will be required:

(i) For all the optimal risk assessment value $\alpha_{i},\,i=1,\,2,...,\,I$.
We rank them from low to high that $0\leq1-\alpha^{(1)}<1-\alpha^{(2)}<\cdot\cdot\cdot<1-\alpha^{(I)}\leq1$,
and $\alpha^{(i)}$ denotes the $i$th largest risk assessment value,
there exists a small positive number $c$ that:
\[
\text{\ensuremath{\min}}_{i}\Bigl|\alpha^{(i)}-\alpha^{(i-1)}\Bigr|\geq c.
\]

(ii) There exists a large positive value $M$, so that $\hat{l}^{m}\leq M$
for $m=1,\,2,...,\,M_{l}$

(iii) There exists a positive number $C$ so that, $\tilde{\pi}\leq C$
uniformly, and $\epsilon_{0}>0$ that $\|\tau-\tilde{\tau}\|_{2}\leq\epsilon_{0}$.
\end{assumption}

\begin{prop}
\label{Proposition 4.5} If assumption \ref{Assumption 4.4} (ii)
holds, by Popoviciu inequality, we have $S_{\tilde{l}}^{2}\leq\frac{M^{2}}{4}$.
\end{prop}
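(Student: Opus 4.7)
The plan is to invoke Popoviciu's inequality on variances, which states that for any real numbers $x_{1},\ldots,x_{n}$ contained in an interval $[a,b]$, the sample variance satisfies $\frac{1}{n}\sum_{i=1}^{n}(x_{i}-\bar{x})^{2}\leq(b-a)^{2}/4$, where $\bar{x}$ denotes the arithmetic mean. I will apply this inequality to the realized values $\hat{l}^{1},\ldots,\hat{l}^{M_{l}}$, whose sample mean is by definition $\tilde{l}$.

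The first step is to pin down an interval containing every $\hat{l}^{m}$. Assumption~\ref{Assumption 4.4}(ii) directly supplies the upper bound $\hat{l}^{m}\leq M$. For the matching lower bound, I would read off the Lagrangian definition of $\hat{l}^{m}$ from Proposition~\ref{Proposition 4.3}: evaluating at $\alpha=1$ annihilates the term $1-\alpha$, while the residual dual correction is controlled through the boundedness of $\tilde{\pi}$ and the tolerance $\tilde{\tau}$ from Assumption~\ref{Assumption 4.4}(iii); should the resulting lower end be negative, its magnitude can be absorbed into $M$ without altering the form of the final bound. Either way, we may take $\hat{l}^{m}\in[0,M]$ for every $m=1,\ldots,M_{l}$.

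With this interval in hand, Popoviciu's inequality yields
\[
\sum_{m=1}^{M_{l}}\bigl(\hat{l}^{m}-\tilde{l}\bigr)^{2}\;\leq\;\frac{M_{l}\,M^{2}}{4}.
\]
Substituting into the definition
\[
S_{\tilde{l}}^{2}\;=\;\frac{1}{M_{l}(M_{l}-1)}\sum_{m=1}^{M_{l}}\bigl(\hat{l}^{m}-\tilde{l}\bigr)^{2}
\]
produces $S_{\tilde{l}}^{2}\leq M^{2}/\bigl(4(M_{l}-1)\bigr)$, which is bounded above by $M^{2}/4$ for every $M_{l}\geq 2$ (and $S_{\tilde{l}}^{2}$ is undefined for $M_{l}=1$, so this case is vacuous). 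The only real obstacle is justifying the lower endpoint of the interval, since Assumption~\ref{Assumption 4.4}(ii) as stated furnishes only the upper side; once that technicality is cleared, the claim reduces to a one-line application of Popoviciu's inequality.
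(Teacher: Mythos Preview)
Your proposal is correct and matches the paper's approach exactly: the paper gives no proof beyond the statement itself, simply asserting that the bound follows ``by Popoviciu inequality,'' so your expansion is precisely the argument the paper leaves implicit. You are also right to flag that Assumption~\ref{Assumption 4.4}(ii) supplies only the upper endpoint $M$; the paper silently presumes a lower endpoint (effectively $0$), and your remark that the Lagrangian structure in Proposition~\ref{Proposition 4.3} together with Assumption~\ref{Assumption 4.4}(iii) can furnish one is the natural patch. As you observe, the calculation actually delivers the sharper bound $S_{\tilde{l}}^{2}\leq M^{2}/\bigl(4(M_{l}-1)\bigr)$, since $S_{\tilde{l}}^{2}$ is the variance of the sample mean rather than the sample variance; the paper's stated $M^{2}/4$ is simply the cruder consequence.
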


Given $v>0$, build a finite set $U_{v}$ of $U$ such that for any
$x\in X$, there exists $x^{\prime}\in X_{v}$ satisfying $\|x-x^{\prime}\|\leq v$.
Denoting by $D_{E}$ the diameter of set $X$, i.e., $D_{E}=\max_{x_{1},x_{2}\in X}\|x_{1}-x_{2}\|$,
then such set $X_{v}$ can be construct with $|X_{v}|\leq(D_{E}/v)$,
the finite set $X_{v}$ is called $v$-net of set $X$, then we have
the following main theorem of risk ranking that studies the relationship
between the sample size and the validation probability of ranking
by SAA problem to the original problem.
\begin{thm}
\label{Theorem 4.6} Based on assumptions \ref{Assumption 4.1} and
\ref{Assumption 4.4}, propositions \ref{Proposition 4.3} and \ref{Proposition 4.5},
and \ref{Lemma 4.2}z, then there exists $0<\epsilon\leq c$, that
the ranking for $I$ items from SAA method is the same as the ranking
from optimal value of Problem (\ref{General problem-1}) with the
probability $\mathbb{P}$ that:
\[
\mathbb{P}\geq\left[\left(1-\beta\right)\cdot\left(1-\gamma\right)\cdot\left(1-\delta\right)\right]^{I},
\]
with sample size for each item
\begin{align*}
N & \geq\max\left\{ \frac{8\sigma^{2}}{\epsilon^{2}}\log\left[\frac{2}{\beta}\left(2+\frac{D_{E}}{v_{1}^{2}}\right)\right],\,\frac{8\sigma^{2}C^{2}}{(\epsilon-C\epsilon_{0})^{2}}\log\left[\frac{2}{\beta}\left(2+\frac{D_{E}}{v_{2}^{2}}\right)\right],\right\} \\
 & +\frac{8\sigma^{2}C^{2}}{(c-\epsilon-z_{\gamma/2}\cdot\frac{M^{2}}{4})^{2}}\log\left[\frac{2}{\beta}\left(2+\frac{D_{E}}{v_{3}^{2}}\right)\right]\times M_{l},
\end{align*}
and totally $I\times N$ samples , where
\[
v_{1}:=\left\{ 4(\Psi+\Phi)/\epsilon+2\right\} ^{-1},
\]
\[
v_{2}:=\left\{ 4(\Psi+\Phi)R/(\epsilon-C\epsilon_{0})+2\right\} ^{-1},
\]
\[
v_{3}:=\left\{ 4(\Psi+\Phi)/(c-\epsilon-z_{\gamma/2}\cdot\frac{M^{2}}{4})+2\right\} ^{-1},
\]
and
\[
\sigma^{2}:=\text{\ensuremath{\max}}_{\alpha\in(0,1],\,\eta\in E}\left\{ \textrm{VaR}\left[\phi(X)\right],\,\textrm{VaR}\left[\psi(X)\right],\,\textrm{VaR}\left[G(\alpha,\,\eta,\,X)-\mathbb{E}\left[G(\alpha,\,\eta,\,X)\right]\right]\right\} .
\]
\end{thm}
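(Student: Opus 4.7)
The plan is to show that for each of the $I$ items, the SAA optimal value $1-\tilde{\alpha}_i$ lies within $\epsilon$ of the true optimal value $1-\alpha_i^{\ast}$ on an event of probability at least $(1-\beta)(1-\gamma)(1-\delta)$, and then to apply a union bound across items. Since Assumption \ref{Assumption 4.4}(i) separates consecutive true values by at least $c$, choosing $\epsilon$ small enough in $(0,c]$ forces the SAA-induced ordering to coincide with the true ordering on this good event, so that the joint probability becomes $[(1-\beta)(1-\gamma)(1-\delta)]^{I}$ by independence of the per-item sample draws.

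First I would invoke Lemma \ref{Lemma 4.2} to restrict the inner minimization to a compact interval $E$, so that $(\alpha,\eta)\in[0,1]\times E$. Under Assumption \ref{Assumption 4.1}(C1)--(C3), $G$ is Lipschitz in $(\alpha,\eta)$ with integrable constants $\phi(X),\psi(X)$ and its centered deviations admit an MGF near zero. This is exactly the setting of the uniform large-deviation argument of \citet{Wang2008}: covering $[0,1]\times E$ with a $v_1$-net of cardinality at most $D_E/v_1^{2}$ and using (C1) to transfer the supremum from the net to the whole set produces
\begin{equation*}
\mathbb{P}\Bigl\{\sup_{(\alpha,\eta)}|g_N(\alpha,\eta)-g(\alpha,\eta)|\geq \epsilon/2\Bigr\}\leq 2\Bigl(2+\frac{D_E}{v_1^{2}}\Bigr)\exp\Bigl(-\frac{N\epsilon^{2}}{8\sigma^{2}}\Bigr),
\end{equation*}
and setting the right-hand side equal to $\beta$ yields the first term in the $N$ bound; this step controls the SAA feasible-region error.

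Next, the Lagrangian characterization in Proposition \ref{Proposition 4.3} represents $1-\tilde{\alpha}$ through the saddle point $(\tilde\alpha,\tilde\pi)$, so the dual-gap deviation is amplified by $\tilde\pi\leq C$ and offset by the right-hand-side perturbation $\|\tau-\tilde\tau\|\leq \epsilon_0$. A second application of the same large-deviation inequality on a $v_2$-net, with the residual tolerance $\epsilon-C\epsilon_0$ replacing $\epsilon$, produces the second term. The lower-bound estimator $\tilde{l}$ in Proposition \ref{Proposition 4.3}(ii) averages $M_l$ resampled SAA optima; by Proposition \ref{Proposition 4.5} its variance satisfies $S_{\tilde{l}}^{2}\leq M^{2}/4$, so the CLT confidence level $1-\gamma$ consumes $z_{\gamma/2}\cdot M^{2}/4$ from the separation budget $c$. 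Requiring $c-\epsilon-z_{\gamma/2}\cdot M^{2}/4>0$ and running the same net argument on a $v_3$-net for each of the $M_l$ replications yields the third term, multiplied by $M_l$. The residual $(1-\delta)$ factor comes from Proposition \ref{Proposition 4.3}(i), whose normal tail governs the probability that $\tilde{\alpha}$ is in fact feasible for the original problem.

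The main obstacle is the simultaneous bookkeeping of these per-item events: feasibility deviation ($1-\beta$), Lagrangian-gap deviation with perturbed right-hand side ($1-\beta$ again, reused), lower-bound CLT ($1-\gamma$), and upper-bound CLT ($1-\delta$). For each item these must be intersected so that the residual tolerances $\epsilon$, $\epsilon-C\epsilon_0$, and $c-\epsilon-z_{\gamma/2}\cdot M^{2}/4$ are all positive, and the $v$-net radii $v_1,v_2,v_3$ must be tuned so that the Lipschitz discretization errors are absorbed within each respective tolerance, producing precisely the formulas stated in terms of $\Psi$ and $\Phi$. Once this calibration is carried out, the per-item sample size follows from inverting the three exponential tails, and the final union bound across the $I$ items gives the stated joint ranking probability.
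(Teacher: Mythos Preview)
Your proposal is correct and follows essentially the same route as the paper's proof in Appendix~II: three uniform large-deviation bounds over $v_1$-, $v_2$-, $v_3$-nets (the feasible-region deviation, the Lagrangian-gap deviation with $\tilde\pi\leq C$ and $\|\tau-\tilde\tau\|\leq\epsilon_0$, and the lower-bound estimator deviation with $S_{\tilde l}^2\leq M^2/4$), followed by inversion of the exponential tails and a product over the $I$ independent items. One small wording slip: in your last sentence you call the combination across items a ``union bound,'' but as you correctly state earlier it is an intersection of independent events, giving the product $[(1-\beta)(1-\gamma)(1-\delta)]^{I}$ rather than a union-bound correction.
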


\begin{proof}
See Appendix II
\end{proof}
For next theorem, we illustrate the problem in a reverse way by studying
given a certain sample size $N=N_{1}+N_{2}\times M_{l}$, where $N_{1},\,N_{2}>0$,
what are the probability that the ranking system valid correlated
with $N$ .
\begin{thm}
\label{Theorem 4.7} Based on results in Theorem \ref{Theorem 4.6},
then given sample size $N=N_{1}+N_{2}\times M_{l}$, where $N_{1},\,N_{2}>0$,
there exists $0<\epsilon\leq c$, so that the ranking for $I$ items
from SAA method is the same as the ranking from optimal value of Problem
(\ref{General problem-1}) with the probability $\mathbb{P}$ that:
\begin{align*}
\mathbb{P} & \geq\min\Biggl\{1-2\left[2+\frac{D_{E}}{v_{1}^{2}}\right]\exp\left(-\frac{N_{1}\epsilon^{2}}{8\sigma^{2}}\right)\\
 & 1-2\left[2+\frac{D_{E}}{v_{2}^{2}}\right]\exp\left(-\frac{N_{1}(\epsilon-R\epsilon_{0})^{2}}{8\sigma^{2}R^{2}}\right)\Biggr\}^{I}\\
 & \times\left\{ 1-2\left[2+\frac{D_{E}}{v_{3}^{2}}\right]\exp\left\{ \frac{-N_{2}\cdot(c-\epsilon-z_{\gamma/2}\cdot\frac{M^{2}}{4})^{2}}{8\sigma^{2}}\right\} \right\} ^{I},
\end{align*}
where $v_{1},\,v_{2},\,v_{3}$ and $\sigma$ are the same in Theorem
\ref{Theorem 4.6}.
\end{thm}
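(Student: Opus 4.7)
The plan is to read Theorem \ref{Theorem 4.7} as a direct inversion of Theorem \ref{Theorem 4.6}. In Theorem \ref{Theorem 4.6}, each of the three sample-size terms arises by setting a large-deviation tail probability equal to a target confidence level and solving for $N$. Here I would instead fix the budget $N = N_{1} + N_{2} \times M_{l}$, attribute $N_{1}$ samples to the primal SAA solve together with the resampling-based feasibility/upper-bound check, and $N_{2}$ samples to each of the $M_{l}$ independent Lagrangian replicates underlying the lower-bound estimator $\tilde l$, and then read off the resulting success probabilities directly from the same tail inequalities.

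First I would recall the three ingredients that drive the sample-size bound in Theorem \ref{Theorem 4.6}. Under Assumption \ref{Assumption 4.1}(C1)--(C3), a standard $v$-net covering argument combined with Lemma \ref{Lemma 4.2} yields, for each free gap parameter $\epsilon'$ and matching net radius $v$, a uniform large-deviation bound of the form
\[
\mathbb{P}\Bigl(\sup_{\alpha,\eta}|g_{N}(\alpha,\eta)-g(\alpha,\eta)|>\epsilon'\Bigr)\leq 2\Bigl[2+\tfrac{D_{E}}{v^{2}}\Bigr]\exp\Bigl(-\tfrac{N\epsilon'^{2}}{8\sigma^{2}}\Bigr).
\]
Applied with gap $\epsilon$ and radius $v_{1}$, it controls how accurately the SAA feasible region tracks the true one; with gap $\epsilon-C\epsilon_{0}$ and radius $v_{2}$, it controls the validity of the upper bound $1-\tilde\alpha$ obtained from the primal-dual pair $(\tilde\alpha,\tilde\pi)$ via Proposition \ref{Proposition 4.3}(i); with gap $c-\epsilon-z_{\gamma/2}\cdot M^{2}/4$ and radius $v_{3}$, together with the variance bound $S_{\tilde l}^{2}\leq M^{2}/4$ from Proposition \ref{Proposition 4.5}, it controls the validity of the lower bound $\tilde l_{L}$ in Proposition \ref{Proposition 4.3}(ii).

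Next I would carry out the inversion. Instead of solving for $N$ given a target confidence $\beta$, I substitute $N_{1}$ into the first two bounds and $N_{2}$ into the third. This yields, per item, success probabilities
\[
1-2\Bigl[2+\tfrac{D_{E}}{v_{1}^{2}}\Bigr]\exp\Bigl(-\tfrac{N_{1}\epsilon^{2}}{8\sigma^{2}}\Bigr),\quad 1-2\Bigl[2+\tfrac{D_{E}}{v_{2}^{2}}\Bigr]\exp\Bigl(-\tfrac{N_{1}(\epsilon-C\epsilon_{0})^{2}}{8\sigma^{2}R^{2}}\Bigr),
\]
and the analogous expression with $N_{2}$ and $v_{3}$ for the lower bound. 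Because the first two events depend on the same sample of size $N_{1}$, I bound the probability of their simultaneous occurrence from below by the minimum of the two; because the lower-bound replications are independent of the primal solve, its success multiplies. Finally, to conclude that all $I$ items are correctly ranked, I apply a union bound over items — as in the proof of Theorem \ref{Theorem 4.6} — producing the outer exponent $I$ and the product form displayed in the statement.

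The main obstacle is the bookkeeping: correctly allocating $N_{1}$ versus $N_{2}\times M_{l}$, writing the two $N_{1}$-dependent events under a single ``$\min$'' rather than naively multiplying them, and verifying that the replication variance of $\tilde l$ is absorbed into the factor $M^{2}/4$ through Popoviciu's inequality rather than yielding an additional $M_{l}$ in the exponent. A secondary point is that the net radii $v_{1},v_{2},v_{3}$ were calibrated in Theorem \ref{Theorem 4.6} as functions of the free parameter $\epsilon\in(0,c]$; here that parameter remains free, and the stated inequality should be understood as holding for any admissible choice of $\epsilon$, with the tightest bound obtained by optimizing over it. Once these three issues are handled, the conclusion follows by concatenating the three tail estimates exactly as they were used in the forward direction of Theorem \ref{Theorem 4.6}.
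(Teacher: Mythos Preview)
Your proposal is correct and matches the paper's approach: the paper does not give a separate proof of Theorem \ref{Theorem 4.7} but simply states it as an immediate consequence of Theorem \ref{Theorem 4.6}, and your write-up is precisely the intended inversion---fix $N_{1}$ and $N_{2}$, substitute into the three large-deviation tail bounds derived in Appendix~II, take the minimum over the two $N_{1}$-dependent events, multiply by the independent $N_{2}$-dependent event, and raise to the $I$th power across items. The bookkeeping points you flag (allocation of $N_{1}$ versus $N_{2}\times M_{l}$, the Popoviciu bound absorbing $S_{\tilde l}^{2}$, the free choice of $\epsilon\in(0,c]$) are exactly the ones that need to be checked, and you have them right.
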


\section{Online learning with stochastic approximation}

In the above sections, we introduced sample average approximation
as a data-driven technique to tackle our risk ranking problem. SAA
is the appropriate computational methods based on batch learning,
and we argue its large-scale sample performance for risk ranking problem
in the last section. While, for the operations of systems in real
world, little data might be collected at the beginning but massive
new data become accessible sequentially. It is important to develop
some computational methods to dynamically adjust the risk-assessment
and ranking results while learning new random outcome from an unknown
probability distribution at each step. i.e., real-time risk assessment.
A well-understood, general-purpose method for solving stochastic optimization
problems, alternative to using the SAA principle, is called stochastic
approximation (SA) proposed in \citet{Kushner2003,Kiefer1952,Robbins1951}.
Classic stochastic approximation algorithm is in a recursion formulation
that processes a sequence of data to estimate expected value in an
online optimization way. 

For solving general unconstrained optimization problem, SA wins SAA
asymptotically on the computational effort required to obtain solutions
of a given quality based on the results from \citet{Kim2015}. Another
computational advantage by using SA enable us to derive the quality
of risk assessment result with the number of iterations, while for
SAA we have to apply statistical methods to figure out the lower and
upper bound for the optimal value summarized in Proposition \ref{Proposition 4.3}
to infer the quality of ranking results. However, risk ranking by
SAA dominates SA in terms of the generality. First, for SAA, we can
study whether the approximated risk ranking is exactly the same as
the true ranking results, while, for SA, we have to construct a loss
function as a relax metric for measuring the quality of ranking. Second,
the quality of ranking analysis for SA is developed based on the fact
that the solution gap of SA follows certain probability distribution
(e.g., Gumbel, Normal, Gamma). So the choice of the distribution will
largely affect the theoretical analysis result. SAA provides the probability
bounds based on statistical methods, which has much higher reliability. 

In the following subsections, we will derive the stochastic approximation
algorithm for Problem (\ref{General problem-1}), and develop main
results and analysis on risk ranking.

\subsection{Algorithm}

In this section, we will seek to develop the algorithm to solve Problem
(\ref{General problem-1}) in stochastic approximation way. The form
of stochastic optimization problem immediately leads to an online
gradient/subgradient descent algorithm, and similar algorithms have
already been investigated in following papers (\citealt{Mahdavi2012,Jiang2015,Duchi2011}).
These papers suggest the use of stochastic approximation or stochastic
gradient/subgradient descent algorithms as a crucial step to estimate
the expectation. As our first step, we transform Problem (\ref{General problem-1})
into an unconstrained problem: an appropriate saddle-point problem
which leads to an online primal-dual algorithm, and let Lagrangian
for Problem (\ref{General problem-1}) to be $L:\,(0,\,1]\times E\times(-\infty,\,0]\rightarrow\mathbb{R}$
given by
\[
L(\alpha,\,\eta,\,\lambda)=\mathbb{E}^{P}\left\{ 1-\alpha+\lambda\left[\eta+\frac{1}{\alpha}\left[\phi^{\ast}(X-\tau-\eta)\right]\right]\right\} ,
\]
Since function $\phi^{\ast}$ is a convex function, the Lagrangian
is concave in $(\alpha,\,\eta)$ and convex (linear) in $\lambda$,
then by the duality theory of convex optimization problem, finding
the optimal solution of Problem (\ref{General problem-1}) is equivalent
to solving the following saddle problem
\begin{equation}
\max_{(\alpha,\,\eta)\in(0,1]\times E}\min_{\lambda\leq0}L(\alpha,\,\eta,\,\lambda),\label{Saddle}
\end{equation}
which naturally yields the solution algorithm to be an online primal
dual-algorithm with stochastic subgradient method. We define the realization
of the Lagrangian would be
\begin{equation}
\left[L(\alpha,\,\eta,\,\lambda)\right](d_{t})=1-\alpha+\lambda\left[\eta+\frac{1}{\alpha}\left[\phi^{\ast}(d_{t}-\tau-\eta)\right]\right],\label{Lagrangian}
\end{equation}
where $d_{t}\in X$, $t=1,2,....,T$ are the sequential outcomes of
a variable in $T$ iterations.

\begin{algorithm}
\textbf{Input}: total iterations $T$, time step $t=0.$ 

\textbf{Step 1}. Set $(\alpha_{0},\,\eta_{0},\,\lambda_{0})\in(0,\,1]\times E\times(-\infty,\,0]$,
and $R_{0}=0$.

\textbf{for} $t=1,2,...,T$ \textbf{do}

\textbf{Step 2}. Obtain $d_{t}$ on $X$ and observe saddle function
$(\alpha,\,\eta,\,\lambda)\rightarrow\left[L(\alpha,\,\eta,\,\lambda)\right](d_{t}).$

\textbf{Step 3}. Compute sub-gradient $g_{t}\in\partial_{(\alpha,\eta)}\left[L(\alpha_{t-1},\,\eta_{t-1},\,\lambda_{t-1})\right](d_{t})$
and $\bigtriangledown_{\lambda}\left[L(\alpha_{t-1},\,\eta_{t-1},\,\lambda_{t-1})\right](d_{t})$.

\textbf{Step 4}. Update:
\[
(\alpha_{t},\,\eta_{t})=\text{\ensuremath{\prod}}_{(0,\,1]\times E}\left((\alpha_{t-1},\,\eta_{t-1})+\frac{g_{t}}{t}\right),
\]
\[
\lambda_{t}=\text{\ensuremath{\prod}}_{(-\infty,\,0]}\left(\lambda_{t-1}-\frac{1}{t}\bigtriangledown_{\lambda}\left[L(\alpha_{t-1},\,\eta_{t-1},\,\lambda_{t-1})\right](d_{t})\right),
\]
\[
R_{t}=R_{t-1}-\frac{1}{t}\left(R_{t-1}-\left[L(\alpha_{t-1},\,\eta_{t-1},\,\lambda_{t-1})\right](d_{t})\right).
\]

\textbf{end for}

\textbf{Step 5}. Return $R_{T}$

\caption{Online primal-dual sub-gradient descent algorithm}
\end{algorithm}
In the above algorithm $R_{T}$ will become the estimation for the
optimal value of Problem (\ref{Saddle}), and in next section, we
proceed to validating SA for a risk ranking system with $I$ different
random variables $X_{i}.\,i=1,\,2,...,\,I$ with their target $\tau_{i},\,i=1,\,2,...,\,I$
by deriving the convergence rate of $R_{T}$ to the optimal value
of Problem \ref{Saddle} $R^{\ast}$ i.e., Optimal gap with respect
to the number of iterations.

\subsection{Main results}

In this section, we seek to prove the validation of risk ranking produced
by online learning i.e., stochastic approximation methods in Algorithm
2, and find the minimal iteration number $T$ (i.e., the minimal sample
size required for each item) required to satisfy certain accuracy
of ranking. For risk ranking with SAA, we focus on figuring out the
minimal required sample size to ensure the approximated risk ranking
is exactly the same as the true ranking results. In this section,
we would relax such condition by defining a new metric for measuring
the quality of ranking as follows.
\begin{defn}
\label{Definition 5.1} Given $R_{T}^{i},\,i=1,\,2,...,\,I$ denote
the results from Algorithm 2, and their corresponding true optimal
solution of Problem (\ref{General problem-1}) as $R_{i},\,i=1,\,2,...,\,I$.
We define the measures of the quality of the ranking as a loss function
$\mathcal{E}$:
\[
\mathcal{E}=\sum_{i}\sum_{j}I_{\left\{ \left(R_{T}^{i}-R_{T}^{j}\right)\left(R_{i}-R_{j}\right)<0\right\} },\:i,\,j=1,2,...,I,\,i\neq j.
\]
The interpretation of loss function is to sum up the total inversion
number in a ranking system based on the idea of evaluation metrics
of ranking algorithms\textcolor{black}{{} in \citet{Chen2012,Frey2007,Kriukova2016,Zhang2012a,Chen2013}.}
If the closer the ranking derived by online algorithm to the underlying
true ranking is, the few the inversion number will produce, then the
value of loss function $\mathcal{E}$ will become lower. Given $e>0$,
we plan to figure out the relationship between the probability that
loss $\mathcal{E}$ is bounded by $e$ (i.e., $\mathbb{P}\left(\mathcal{E}\leq e\right)$)
with total iteration number $T$. It is anticipated that such probability
will increase with $T$ and $e$. For the first step to study the
ranking quality, we seek to establish the almost sure convergence
results and convergence rate of Algorithm 2 based on following assumptions. 
\end{defn}

\begin{assumption}
\label{Assumption 5.2} (i) (Bounded derivative) For all $(\alpha,\,\eta,\,\lambda)\in(0,\,1]\times E\times(-\infty,\,0]$,
Let $f(\alpha,\,\eta,\,\lambda)$ denote
\[
f(\alpha,\,\eta,\,\lambda)=\max\left\{ \|\bigtriangledown_{\alpha}L(\alpha,\,\eta,\,\lambda)(d_{t})\|_{2}^{2},\,\|\bigtriangledown_{\eta}L(\alpha,\,\eta,\,\lambda)(d_{t})\|_{2}^{2},\,\|\bigtriangledown_{\lambda}L(\alpha,\,\eta,\,\lambda)(d_{t})\|_{2}^{2}\right\} ,
\]
and
\[
g(\alpha,\,\eta,\,\lambda)=\mathbb{E}\left[f(\alpha,\,\eta,\,\lambda)\right],
\]
then, there exists $M>0$ that
\[
g(\alpha,\,\eta,\,\lambda)\leq M^{2},
\]

(ii) (Strong convexity) For $\alpha_{1},\alpha_{2}\in(0,\,1]$, and
for all $\eta_{1},\eta_{2}\in E$ and $\lambda_{1},\lambda_{2}\leq0$,
there exists a positive value $L_{g}$ that
\[
|g(\alpha_{1},\,\eta_{1},\,\lambda_{1})-g(\alpha_{2},\,\eta_{2},\,\lambda_{2})|\leq L_{g}\left(\|\alpha_{1}-\alpha_{2}\|_{2}^{2}+\|\eta_{1}-\eta_{2}\|_{2}^{2}+\|\lambda_{1}-\lambda_{2}\|_{2}^{2}\right).
\]

(iii) (Lipschitz continuity) For $\alpha_{1},\alpha_{2}\in(0,\,1]$,
and for all $\eta_{1},\eta_{2}\in E$ and $\lambda_{1},\lambda_{2}\leq0$,
there exists a positive value $L_{\Phi}$ that
\[
\mathbb{E}\left(\left[L(\alpha_{t-1},\,\eta_{t-1},\,\lambda_{t-1})\right](d_{t})-\left[L(\alpha^{\ast},\,\eta^{\ast},\,\lambda^{\ast})\right](d_{t})\right)\leq L_{\Phi}\left(\|\alpha_{t}-\alpha^{\ast}\|_{2}^{2}+\|\eta_{t}-\eta^{\ast}\|_{2}^{2}+\|\lambda_{t}-\lambda^{\ast}\|_{2}^{2}\right).
\]
\end{assumption}

Based on above Assumption \ref{Assumption 5.2}, we have following
Theorem on the deriving optimality gap of stochastic approximation
algorithm.
\begin{thm}
\label{Theorem 5.3} By Assumption \ref{Assumption 5.2}, given any
$\kappa_{0}>0$, the solution $R_{T}$ derived by Algorithm 2 satisfies
\[
\mathbb{E}\left[\|R_{T}-R^{\ast}\|_{2}^{2}\right]\leq\kappa^{\prime}/T,
\]
where
\[
\kappa^{\prime}:=\max\left\{ B-2-\frac{3\kappa L_{\Phi}}{\kappa_{0}}/(2\kappa_{0}-1)^{-1},\,\|R_{0}-R^{\ast}\|_{2}^{2}\right\} ,
\]
and
\[
\kappa:=\max\left\{ M^{2}/(2L_{g}-1)^{-1},\,\|\alpha_{0}-\alpha^{\ast}\|_{2}^{2},\,\|\eta_{0}-\eta^{\ast}\|_{2}^{2},\,\|\lambda_{0}-\lambda^{\ast}\|_{2}^{2}\right\} ,
\]
given the following bounds
\[
\mathbb{E}\left[R_{t-1}-\left[L(\alpha_{t-1},\,\eta_{t-1},\,\lambda_{t-1})\right](d_{t})\right]^{2}\leq B,
\]
and
\[
M^{2}:=\sup_{(\alpha,\,\eta)\in(0,1],\,\lambda\leq0}\mathbb{E}\left[\|g(\alpha,\,\eta,\,\lambda)\|_{2}^{2}\right].
\]
\end{thm}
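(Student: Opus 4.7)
The plan is to attack Theorem \ref{Theorem 5.3} in two stages. First, I would isolate an auxiliary convergence rate for the primal--dual iterates themselves: namely, that $\mathbb{E}\bigl[\|\alpha_t-\alpha^{\ast}\|_2^2+\|\eta_t-\eta^{\ast}\|_2^2+\|\lambda_t-\lambda^{\ast}\|_2^2\bigr]\le \kappa/t$ for the $\kappa$ appearing in the statement. This is a standard projected stochastic subgradient estimate on the saddle formulation \eqref{Saddle} with step size $1/t$: expand $\|(\alpha_t,\eta_t,\lambda_t)-(\alpha^{\ast},\eta^{\ast},\lambda^{\ast})\|_2^2$ using nonexpansiveness of the projection onto $(0,1]\times E\times(-\infty,0]$, extract a strong-monotonicity/contraction term from Assumption \ref{Assumption 5.2}(ii), control the residual quadratic term by the uniform bound $M^2$ from Assumption \ref{Assumption 5.2}(i), and close the induction on $t$. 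The constant $\kappa=\max\{M^2/(2L_g-1)^{-1},\|\alpha_0-\alpha^{\ast}\|_2^2,\|\eta_0-\eta^{\ast}\|_2^2,\|\lambda_0-\lambda^{\ast}\|_2^2\}$ is exactly the one that makes the induction self-consistent.

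The second stage is an induction on $t$ for $\mathbb{E}[\|R_t-R^{\ast}\|_2^2]$. Rewriting Step~4,
\[
R_t-R^{\ast}=\Bigl(1-\tfrac{1}{t}\Bigr)(R_{t-1}-R^{\ast})+\tfrac{1}{t}\bigl([L(\alpha_{t-1},\eta_{t-1},\lambda_{t-1})](d_t)-R^{\ast}\bigr),
\]
so squaring and taking expectations conditional on the history gives a quadratic recursion with a main contracting term $(1-1/t)^2\mathbb{E}[\|R_{t-1}-R^{\ast}\|_2^2]$, a variance term of order $1/t^2$ controlled by the hypothesis $\mathbb{E}[(R_{t-1}-[L(\alpha_{t-1},\eta_{t-1},\lambda_{t-1})](d_t))^2]\le B$, and a cross term. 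I would handle the cross term by Young's inequality with the free parameter $\kappa_0$ from the statement, splitting it as $(1/\kappa_0)\,\mathbb{E}[\|R_{t-1}-R^{\ast}\|_2^2]+\kappa_0\,\mathbb{E}\bigl[(\mathbb{E}[L(\alpha_{t-1},\eta_{t-1},\lambda_{t-1})(d_t)\mid\mathcal{F}_{t-1}]-R^{\ast})^2\bigr]$. The inner conditional expectation equals $L(\alpha_{t-1},\eta_{t-1},\lambda_{t-1})-L(\alpha^{\ast},\eta^{\ast},\lambda^{\ast})$, so Assumption \ref{Assumption 5.2}(iii) bounds it by $L_{\Phi}$ times the squared iterate distances, which is where the first-stage estimate $\kappa/t$ is plugged in. This is what introduces the $3\kappa L_\Phi/\kappa_0$-type term in the definition of $\kappa^{\prime}$.

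Putting it together, the recursion reads
\[
\mathbb{E}[\|R_t-R^{\ast}\|_2^2]\le \Bigl(1-\tfrac{2}{t}+\tfrac{c_1}{t\kappa_0}\Bigr)\mathbb{E}[\|R_{t-1}-R^{\ast}\|_2^2]+\tfrac{c_2\kappa L_{\Phi}\kappa_0}{t^2(2\kappa_0-1)^{-1}}+\tfrac{B}{t^2},
\]
where $c_1,c_2$ are small absolute constants arising from the $(1-1/t)$ factors and cross-term bookkeeping. Setting up the induction hypothesis $\mathbb{E}[\|R_{t-1}-R^{\ast}\|_2^2]\le \kappa^{\prime}/(t-1)$ and using $(1-2/t)/(t-1)\le 1/t$, it is a straightforward algebraic verification that the right-hand side is bounded by $\kappa^{\prime}/t$ precisely when $\kappa^{\prime}$ dominates both $\|R_0-R^{\ast}\|_2^2$ and an expression of the form $B-2-(3\kappa L_{\Phi}/\kappa_0)\cdot(2\kappa_0-1)$, matching the definition in the statement. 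The base case $t=0$ holds by construction.

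The main obstacle is the cross term in the $R_t$ recursion: because $R_{t-1}$ is not independent of $(\alpha_{t-1},\eta_{t-1},\lambda_{t-1})$ and the new sample $d_t$ is fresh, one has to insert the tower property correctly and then route the bias through Assumption \ref{Assumption 5.2}(iii) and the first-stage $O(1/t)$ iterate bound; choosing $\kappa_0$ cleanly so that the induction constant $\kappa^{\prime}$ comes out in the exact form stated (with the awkward factor $(2\kappa_0-1)^{-1}$) is the delicate bookkeeping step, and it is the place where a minor miscalibration would force a worse rate or an extra logarithmic factor.
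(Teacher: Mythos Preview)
Your proposal is correct and follows essentially the same two-stage route as the paper's proof in Appendix~III: first establish $\mathbb{E}[\|\alpha_t-\alpha^{\ast}\|_2^2+\|\eta_t-\eta^{\ast}\|_2^2+\|\lambda_t-\lambda^{\ast}\|_2^2]\le 3\kappa/t$ via projection nonexpansiveness, Assumption~\ref{Assumption 5.2}(i)--(ii), and induction; then expand the $R_t$ (the paper writes $C_t$) recursion, control the cross term by a Young-type split with parameter $\kappa_0$, invoke Assumption~\ref{Assumption 5.2}(iii) to route the bias through the first-stage bound, and close a second induction yielding $\kappa'/t$. Your explicit use of the tower property and Young's inequality is in fact more transparent than the paper's terse bookkeeping, but the underlying argument is the same.
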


\begin{proof}
See Appendix III
\end{proof}
The result of Theorem \ref{Theorem 5.3}, we can conclude that the
convergence rate for $R_{T}$ is $O(1/T)$ and as $T$ goes to infinity,
we can claimed the almost sure convergence of our algorithm. 

Next we seek to model the probability distribution for the solution
gap $R_{T}^{i}-R_{i}$ of all the random outcomes. Modern online ranking
system like most currently used Elo variants system use a logistic
distribution rather than Gaussian because it represents the distribution
of maxima relates to extreme value theory (\citealt{Weng2011}), since
here we would model the upper bound of the solution gap $\kappa^{\prime}/T$,
and, in this section, we use Gumbel distribution for modeling. The
cumulative distribution function of Gumbel distribution is
\begin{equation}
F(x;\,\xi,\,\beta)=\exp(-\exp(-(x-\xi)/\beta)),\label{Gumbel}
\end{equation}
where $\xi$ is the location parameter and $\beta>0$ is the scale
parameter. we assume in our paper that \textbf{$\beta=1$}. The main
theorem is listed as follows.
\begin{thm}
\label{Theorem 5.4-1} For online optimization problem with general
convex risk measure constraint, given total iteration budget $T$,
by Assumption \ref{Assumption 5.2}, and Theorem \ref{Theorem 5.3},
and given $e>0$, we have:
\[
\mathcal{\mathbb{P}}\left\{ \mathcal{E}\leq e\right\} =\sum_{i=0}^{e}\left\{ \binom{C_{I}^{2}}{i}\cdot(1-p)^{i}\cdot p^{C_{I}^{2}-i}\right\} ,
\]
where $p=1-\exp(-\exp(-(c-\log(-\log(1))+\kappa^{\prime}/T))$, and
$C_{I}^{2}=\binom{I}{2}$ . $c$ is defined in Assumption \ref{Assumption 4.4}
(i) and $\kappa^{\prime}$ has the same definition as in Theorem \ref{Theorem 5.3}.
\end{thm}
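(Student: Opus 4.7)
The plan is to translate the loss $\mathcal{E}$ (the number of inverted pairs) into a sum of Bernoulli trials over the $\binom{I}{2}=C_I^2$ unordered pairs, identify the per-pair success probability $p$ using the Gumbel tail ansatz on the solution gap $R_T^i-R_i$, and then assemble the binomial CDF. First I would reduce $\mathbb{P}(\mathcal{E}\le e)$ to a counting statement: for any pair $(i,j)$ with $i\ne j$ the indicator $I_{\{(R_T^i-R_T^j)(R_i-R_j)<0\}}$ is Bernoulli, and summing over unordered pairs gives a value in $\{0,1,\dots,C_I^2\}$. By Assumption~\ref{Assumption 4.4}(i), assuming WLOG that $R_i<R_j$, we have $R_j-R_i\ge c$, so inversion of the pair forces
\[
(R_T^i - R_i) - (R_T^j - R_j) \;\ge\; R_j - R_i \;\ge\; c.
\]
Thus the pairwise error event is controlled by the magnitudes of the two individual solution gaps produced by Algorithm~2.

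Next I would impose the Gumbel model (\ref{Gumbel}) with $\beta=1$ on the solution gap $R_T^i-R_i$, calibrating the location parameter so that the tail probability is consistent with the second-moment bound $\kappa'/T$ guaranteed by Theorem~\ref{Theorem 5.3}; this injects the $\kappa'/T$ term into the CDF. Evaluating the Gumbel CDF at the critical separation $c$ (shifted by the reference location constant in the statement) yields the probability of correct pairwise ranking
\[
p \;=\; 1-\exp\bigl(-\exp(-(c-\log(-\log(1))+\kappa'/T))\bigr),
\]
exactly as claimed. Note that the same $p$ applies uniformly to every pair because Assumption~\ref{Assumption 4.4}(i) gives a single worst-case gap $c$ and Theorem~\ref{Theorem 5.3} gives a single worst-case variance proxy $\kappa'/T$.

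Finally I would treat the $C_I^2$ pairwise inversion indicators as independent Bernoulli$(1-p)$ random variables, so that $\mathcal{E}\sim\text{Binomial}(C_I^2,1-p)$ and
\[
\mathbb{P}(\mathcal{E}\le e) \;=\; \sum_{i=0}^{e}\binom{C_I^2}{i}(1-p)^i\, p^{\,C_I^2 - i},
\]
which is precisely the desired identity.

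The two soft spots of the argument are (a) the Gumbel step, which is a modeling assumption on the law of the solution gap rather than a strict consequence of Theorem~\ref{Theorem 5.3} (which only bounds the second moment), and (b) the independence of the pairwise inversion indicators—because each $R_T^i$ appears in $I-1$ different pairs, strict independence is not available and must be invoked as a working convention in line with the online-ranking literature already cited in the paper. I expect step (b) to be the main obstacle if one wishes to upgrade the statement to a fully rigorous bound rather than a modeled approximation; any attempt to weaken independence would force one to replace the exact binomial CDF either by a dependency-corrected version (e.g., via a Stein/Suen-type inequality) or by a union-bound surrogate. The remaining bookkeeping between $c$, the Gumbel parameters, and $\kappa'/T$ is routine.
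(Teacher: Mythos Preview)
Your proposal is correct and follows essentially the same route as the paper's own proof: calibrate the Gumbel location parameter via the $\kappa'/T$ bound from Theorem~\ref{Theorem 5.3}, read off the per-pair (non-)inversion probability at the separation level $c$, and then package the $C_I^2$ pairwise indicators as a binomial. The paper's argument is terser and does not flag the two soft spots you identify (the Gumbel modeling step and the independence of pairwise indicators), but the underlying mechanism is identical.
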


\begin{proof}
Based on the equation of Gumbel distribution (\ref{Gumbel}), we can
compute the equation for $x$ that
\[
\exp(-\exp(-(\kappa^{\prime}/T-x))=1,
\]
 and solve that $x=\log(-\log(1))+\kappa^{\prime}/T$. By Definition
\ref{Definition 5.1}, we can derive the probability that there exists
an inversion in ranking is $p$,
\begin{align*}
p & =1-F(c;\,\log(-\log(1))+\kappa^{\prime}/T,\,1)\\
 & =1-\exp(-\exp(-(c-\log(-\log(1))+\kappa^{\prime}/T)),
\end{align*}
then, given any inversion bound $e$, by the basic theories of permutation
and combination, we have
\[
\mathcal{\mathbb{P}}\left\{ \mathcal{E}=e\right\} =\binom{C_{I}^{2}}{e}\cdot(1-p)^{e}\cdot p^{C_{I}^{2}-e}.
\]
Thus we can prove the desired result.
\end{proof}

\section{Numerical experiments}

\textcolor{black}{In this section, we would apply and validate our
methodologies by risk assessment of Emerging organic compounds (EOCs)
in Singapore waterbody as a case study. The contamination of the urban
water cycle with a wide array of EOCs increases with urbanization
and population density, specially in megacities like Singapore. In
\citet{Pal2014}, Environmental contamination of EOCs has been reviewed
from several perspectives, including developments in analytical techniques,
occurrence of EOCs in surface waters, ground water, sludge and drinking
water, and toxic effect and risk assessment along with regulatory
implications. The main entry points of EOCs into the urban water cycle
include households, hospitals, construction, landscaping, transportation,
commerce, industrial scale animal feeding operations, dairy farms,
and manufacturing. Additional sources include leaking sewer lines,
landfills and inappropriately disposed wastes. Representative compound
classes include hormones, antibiotics, surfactants, endocrine disruptors,
human and veterinary pharmaceuticals, X-ray contrastmedia, pesticides
and metabolites, disinfection-by- products, algal toxins and taste-and-odor
compounds. EOCs comprise recently developed industrial compounds that
have been newly introduced to the environment; compounds that have
been prevalent for some time but are only now being routinely detected
owing to improved detection techniques; and compounds that have been
prevalent for a long time but have only recently been shown to have
harmful eco-toxicological effects. Since experts have little knowledge
on the chemical properties of EOCs, it is important to assess their
risk on a mathematical point of view based on data. There exist varieties
of sources and classes of EOCs and new EOCs continue to be discovered,
and it is necessary to develop real-time computational framework to
assess and rank the ``risk'' for EOCs on the current lists. The
following table lists thirteen the most common discovered EOCs in
Singapore.}

\begin{table}
\begin{centering}
\begin{tabular}{|c|c|}
\hline 
Contaminant  & Lowest PNEC (ng/L) \tabularnewline
\hline 
\hline 
Caffeine & 5,200 \tabularnewline
\hline 
Salicylic acid  & 167,000 \tabularnewline
\hline 
Acetaminophen  & 1,400 \tabularnewline
\hline 
Crotamiton  & 21,000 \tabularnewline
\hline 
Sulpiride  & 100,000 \tabularnewline
\hline 
Chloramphenicol  & 1,600 \tabularnewline
\hline 
Naproxen  & 5,200 \tabularnewline
\hline 
Estrone  & 18 \tabularnewline
\hline 
BPA  & 60 \tabularnewline
\hline 
DEET  & 5200 \tabularnewline
\hline 
Triclosan  & 100 \tabularnewline
\hline 
Benzophenone-3  & 6,000 \tabularnewline
\hline 
Fipronil  & 130 \tabularnewline
\hline 
\end{tabular}
\par\end{centering}
\caption{EOCs and corresponding PNEC in Singapore}

\end{table}
In Table 1, PNEC is called Predicted minimum noeffect concentration.
In \citet{Pal2014}, the PNEC is defined as the concentration below
which unacceptable or harmful effects on organisms are unlikely to
occur, which served as a benchmark or target to identify the ``risk''
of one EOC. The aims of this numerical experiment are to perform our
batch learning and online learning computational framework for risk
assessment and prioritizing on EOCs, and to validate the computational
property of our algorithms.

\subsection{Batch learning}

In this numerical experiment, We seek to show that probability that
the validation of optimal solution derived by Sample average approximation
will increase with the sample size. Choose the conjugate of $\phi$-divergence
function $g_{\alpha}^{*}(x)=\frac{1}{\alpha}x1_{(0,\infty)}(x)$,
then we construct the risk measure model with Conditional value-at-risk
as:
\begin{align*}
\min_{\alpha\in(0,\,1],\,\eta\in\mathbb{R}}\, & \alpha\\
\textrm{s.t.} & \inf_{\eta\in\mathbb{R}}\left\{ \eta+\frac{1}{1-\alpha}\mathbb{E}^{P}\left[X-\eta\right]_{+}\right\} \leq\tau.
\end{align*}
We set following parameters, suppose $X\sim N(100,\,50)$, and Target
$\tau=120$. The following Table 1 shows the relationship between
optimal solution validation with sample size. This experiment is run
on Desktop Dell Optiplex by by Matlab 2013a and Cplex 12.5. It can
be observed in Table 2 that, as the sample size grows, the LB$_{0.95}$
is increasing and gap between lower and upper bound is shrinking based
on the metric $\frac{\textrm{UB}-\textrm{L\ensuremath{B_{0.95}}}}{|\textrm{UB}|}$(\%),
and remain stable for the metric $\frac{\textrm{UB-Opt.Obj}}{\textrm{|Opt.Obj|}}$(\%),
which reflects that the quality of optimal solution is increasing
with the sample size.

\begin{table}
\begin{centering}
\begin{tabular}{|c|c|c|c|l|c|c|}
\hline 
$\alpha$ & $N=50$ & $N=100$ & $N=250$ & $N=500$ & $N=1000$ & $N=5000$\tabularnewline
\hline 
\hline 
Opt.Obj & $\textrm{0.7368}$ & $\textrm{0.7429}$ & $\textrm{0.7371}$ & 0.7659 & 0.7686 & 0.7671\tabularnewline
\hline 
Time (s) & $\textrm{0.239}$ & $\textrm{0.244}$ & $\textrm{0.301}$ & 0.253 & 0.276 & 0.709\tabularnewline
\hline 
UB & $\textrm{0.7990}$ & $\textrm{0.8141}$ & 0.8040 & 0.8392 & 0.8342 & 0.8392\tabularnewline
\hline 
$\mathbb{P}\left\{ \textrm{UB}\right\} $(\%) & $\textrm{100}$\% & $\textrm{100}$\% & 100\% & 100\% & 100\% & 100\%\tabularnewline
\hline 
LB$_{0.95}$ & $\textrm{0.6756}$ & $\textrm{0.6988}$ & 0.6867 & 0.7388 & 0.7449 & 0.7566\tabularnewline
\hline 
$\frac{\textrm{UB}-\textrm{L\ensuremath{B_{0.95}}}}{|\textrm{UB}|}$(\%) & 15.44 & 14.16 & 14.59 & 11.96 & 10.70 & 9.84\tabularnewline
\hline 
$\frac{\textrm{UB-Opt.Obj}}{\textrm{|Opt.Obj|}}$(\%) & 8.44 & 9.58 & 9.08 & 9.57 & 8.53 & 9.40\tabularnewline
\hline 
UB Time (s) & $\textrm{324.842}$ & $\textrm{392.786}$ & 378.193 & 379.355 & 379.424 & 438.124\tabularnewline
\hline 
LB Time (s) & $\textrm{84.837}$ & $\textrm{80.652}$ & 86.2818 & 86.3863 & 98.4709 & 469.205\tabularnewline
\hline 
Total Time (s) & 409.679 & 473.438 & 465.075 & 465.741 & 477.895 & 907.329\tabularnewline
\hline 
\end{tabular}
\par\end{centering}
\caption{Validation of optimal solution with sample size}

\end{table}

\subsection{Online learning}

This experiment seek to check the convergence performance for Algorithms
2. Set random outcome $X\sim N(100,\,50)$ and target $\tau=120$;
For general risk measure, we choose another conjugate of divergence
function as Kullback-Leibler divergence i.e., $\phi^{*}(s)=\frac{1}{\alpha}\left[\exp(s)-1\right]$
and $\rho=50$. Both the online algorithms and show great convergence
in terms of regret. The results are shown in following Figure 2. Both
the regret of Algorithm 2 converges within 1000 steps, which demonstrate
that online optimization will produce a estimated risk index close
to the true one.

\begin{figure}

\begin{centering}
\includegraphics[scale=0.5]{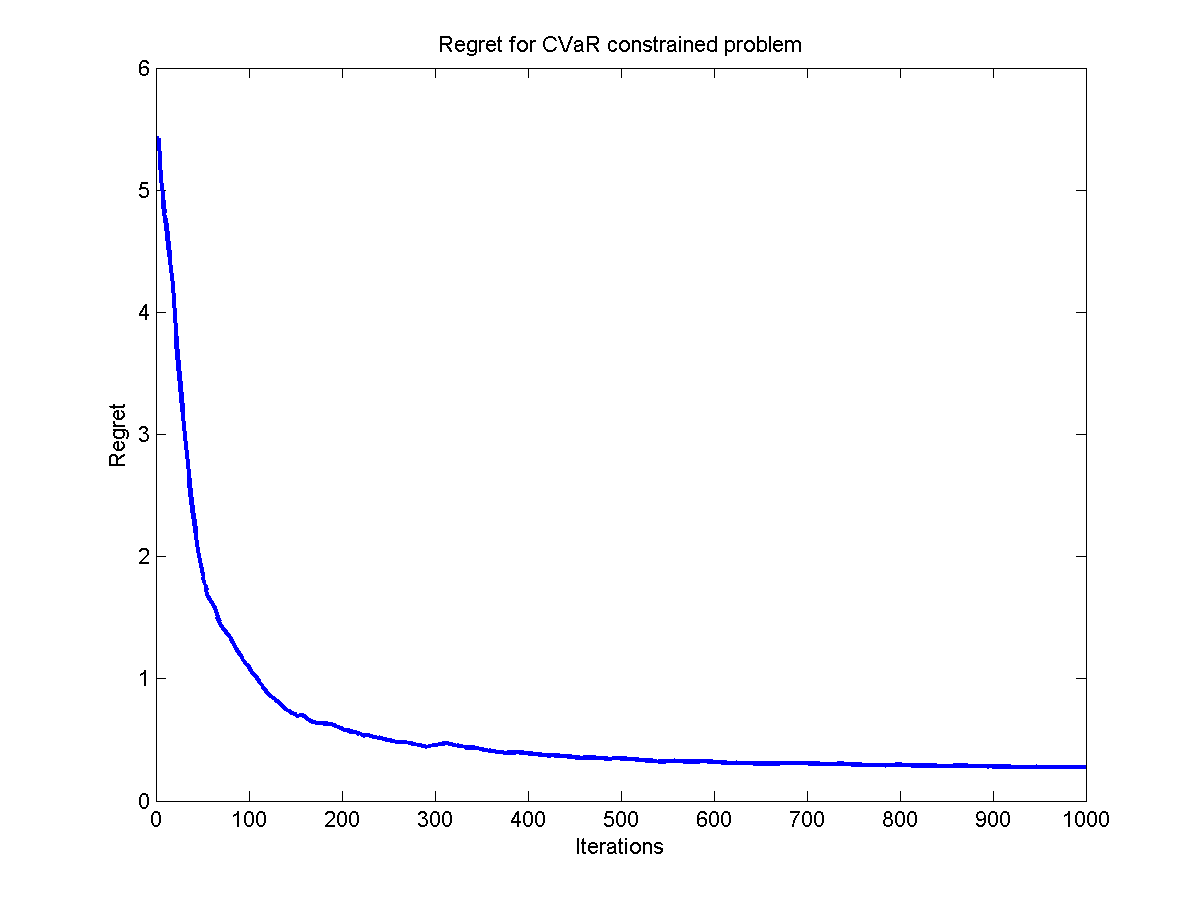}
\par\end{centering}
\begin{centering}
\includegraphics[scale=0.5]{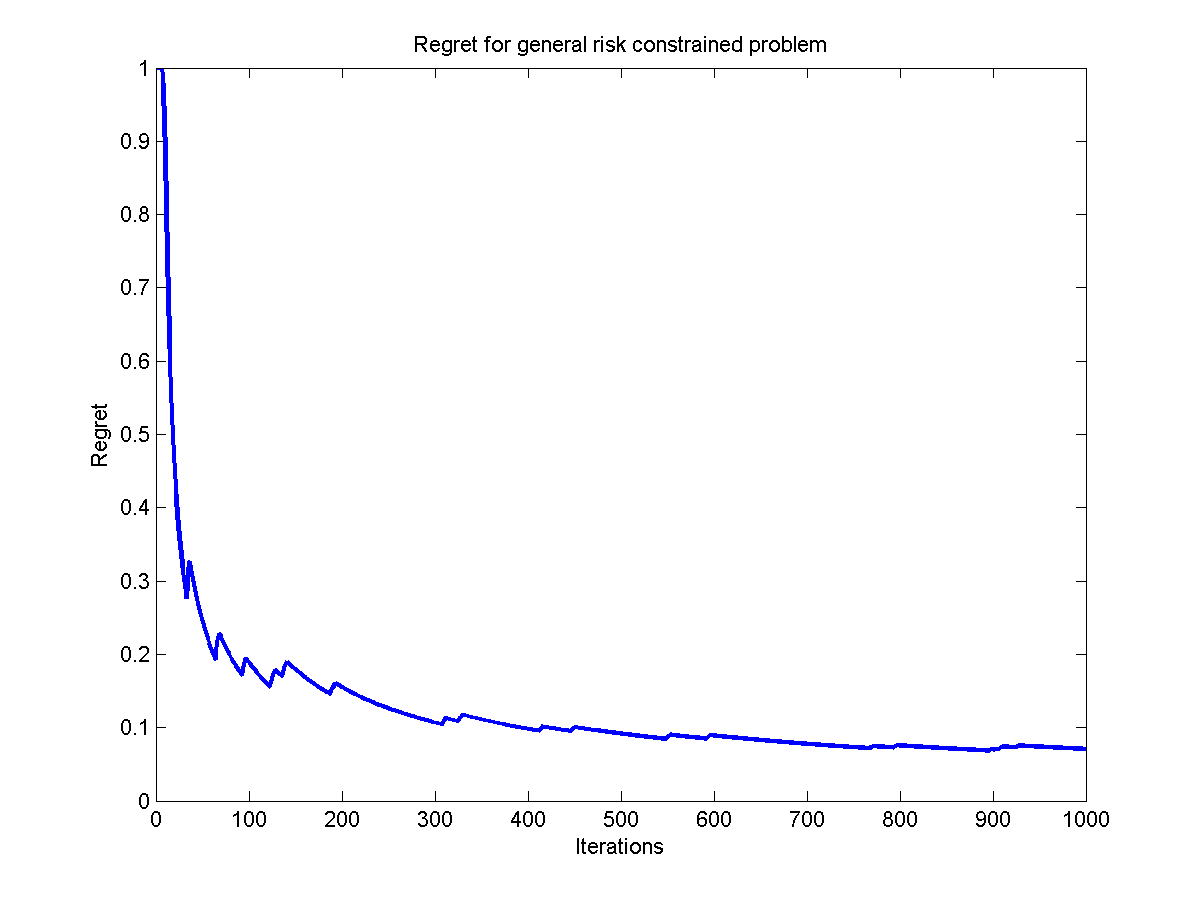}
\par\end{centering}
\caption{Regret for CVaR and general risk constrained problem}

\end{figure}
Here we conduct numerical experiments seeking compare the practical
probability bounds for offline risk ranking by simulation and theoretical
bounds provided by Theorem \ref{Theorem 5.4-1}. We conduct the online
risk ranking with Conditional value-at-risk as risk measure for 8
random variables follow $X\sim N(100,\,50)$ with target respectively
as $\tau=115,\,117,\,119,\,121,\,123,\,125,\,127,\,129$. The following
figure shows the relationship between the validation probability of
ranking results with the iteration number. The maximum possible inversion
number for this ranking system is $C_{8}^{2}=28$. The experiments
are conducted under the case when inversion number $e$ equals $5,\,10,\,15,\,20$
and $25$ respectively. The $\delta$ equal $0.05$ for all the cases.
Figure 3 shows that the practical bound attained by simulation is
lower bound by the theoretical bounds derive in Theorem \ref{Theorem 5.4-1},
thus the practical bound could serve as a conservative guidance on
determined the required size of the sample to ensure a high probability
of risk ranking validation.

\begin{figure}
\begin{centering}
\includegraphics[scale=0.48]{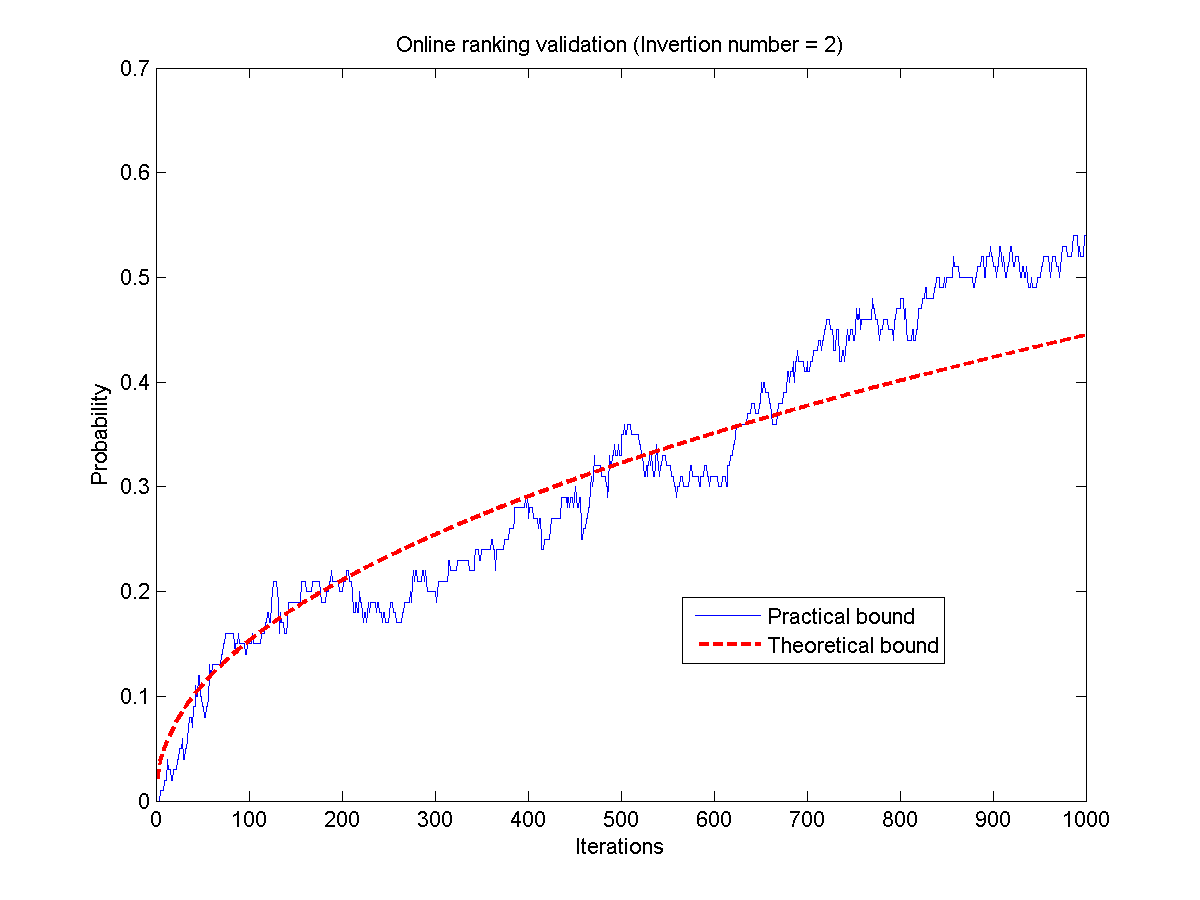}
\par\end{centering}
\begin{centering}
\includegraphics[scale=0.48]{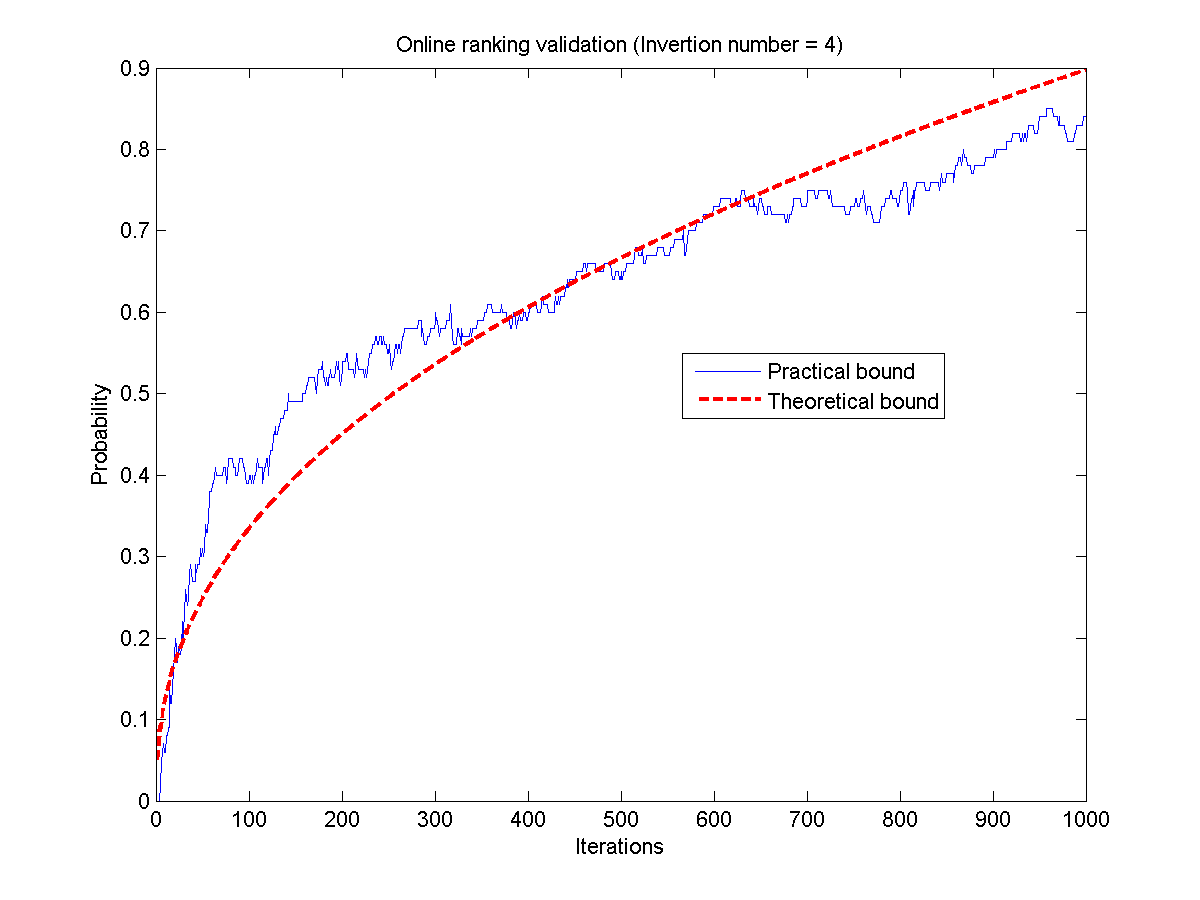}
\par\end{centering}
\begin{centering}
\includegraphics[scale=0.48]{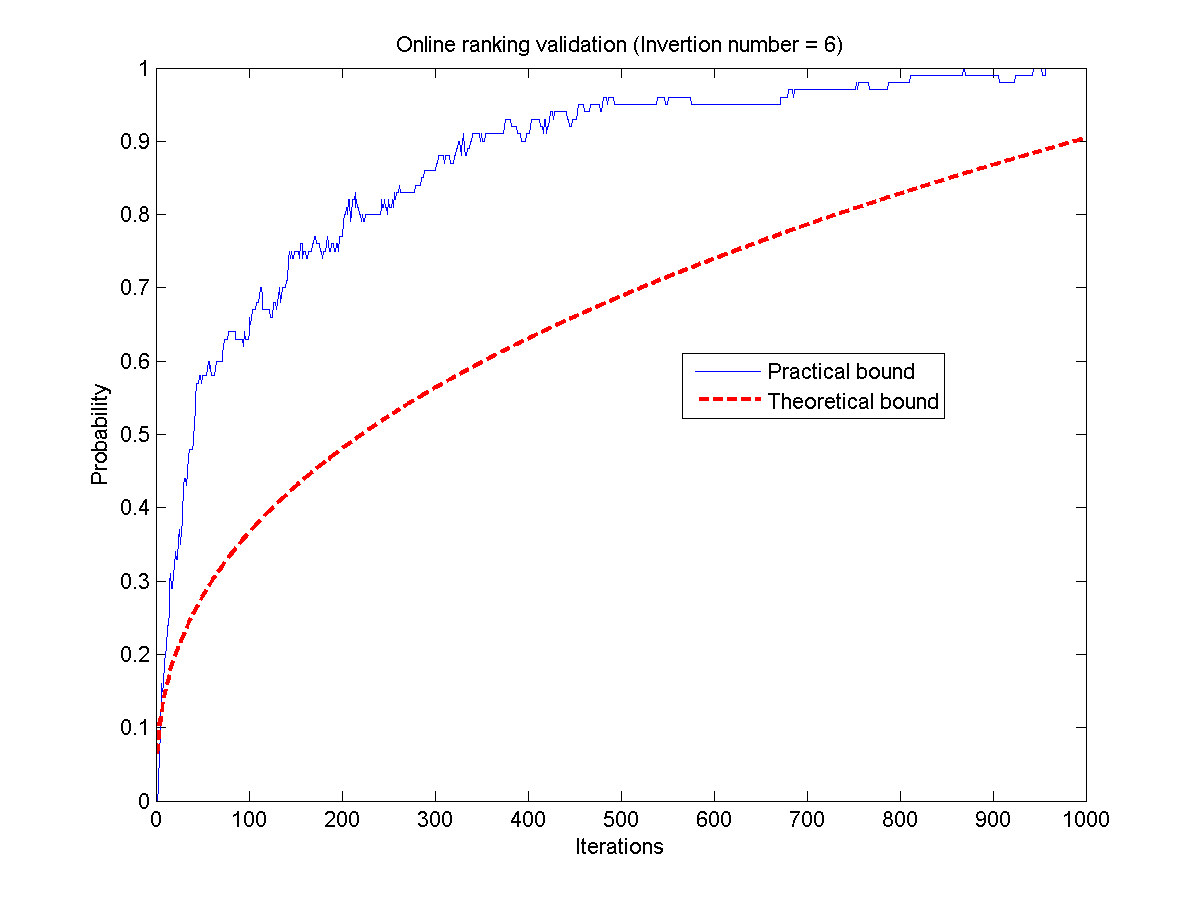}
\par\end{centering}
\caption{Online ranking validation}
\end{figure}

\subsection{Risk assessment and ranking results}

The following figure reveals the real-time risk assessment and ranking
results with 1000 iterations of streaming concentration data of 13
EOCs. Here we use the Conditional value-at-risk as risk measure. From
Figure 3, we can conclude that the Estrone, BPA, Triclosan, and Fipronil
are among the most risky EOCs, and Salicylic acid, Sulpiride and Naproxen
are among the least risky EOC. The risk level of other EOCs are intermediate.
Starting from around 800 iterations, our ranking results becomes stable.

\begin{figure}
\begin{centering}
\includegraphics[scale=0.8]{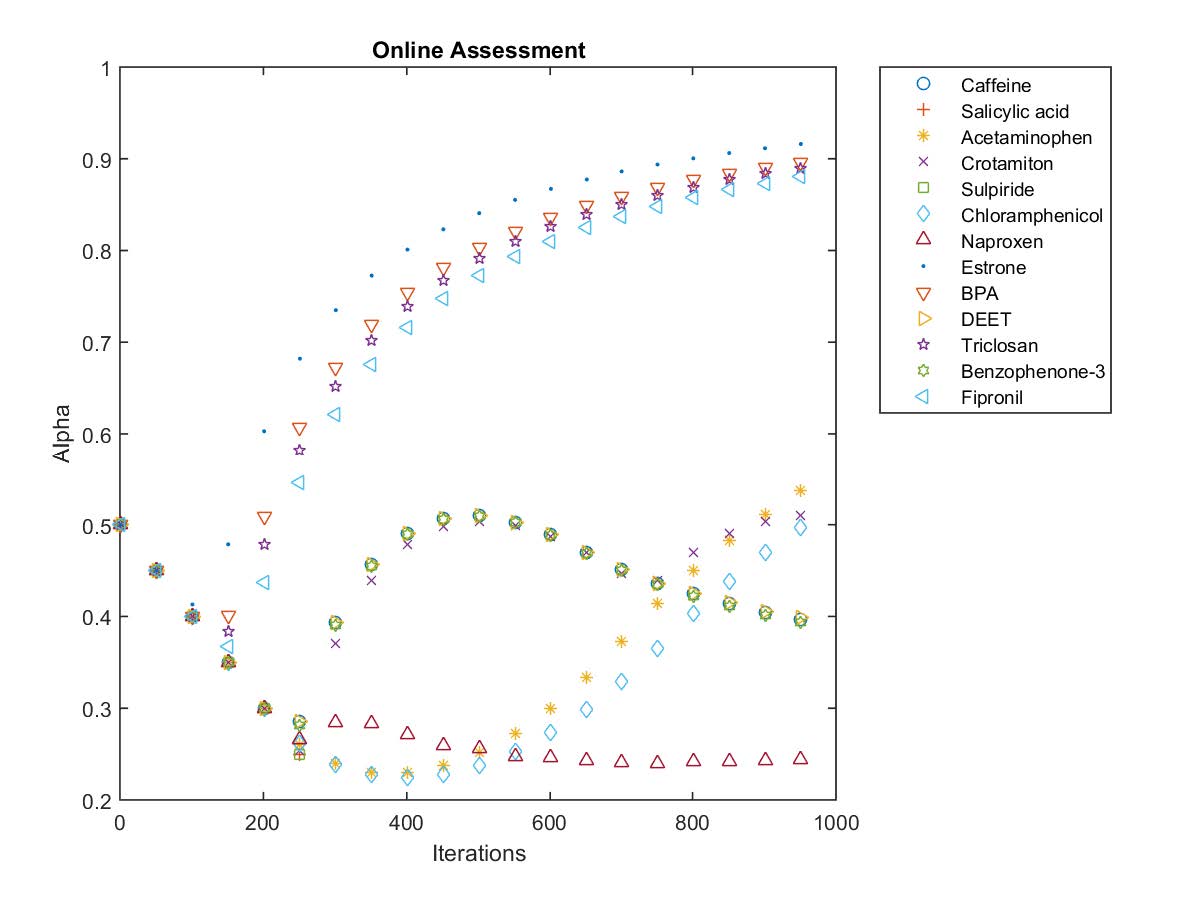}
\par\end{centering}
\caption{Online assessment and ranking result for EOCs}

\end{figure}
The following table shows the comparison online learning method and
batch learning method. We observe that our online and offline computational
methods produce similar ranking and assessment results, when online
ranking is more sensitive compared to offline ranking, which will
contribute more in capture the silent difference between the risk
level of EOCs. 
\begin{center}
\begin{table}
\begin{centering}
\begin{tabular}{|c|c|c|c|c|}
\hline 
Contaminant  & $\alpha$-offline  & ranking-offline & $\alpha$-online & ranking-online\tabularnewline
\hline 
\hline 
Caffeine & 6.1035e-05 & 7 & 0.3897 & 9\tabularnewline
\hline 
Salicylic acid  & 0.0000 & 11 & 0.0000 & 12\tabularnewline
\hline 
Acetaminophen  & 0.5308 & 5 & 0.5598 & 5\tabularnewline
\hline 
Crotamiton  & 6.1035e-05 & 7 & 0.5106 & 7\tabularnewline
\hline 
Sulpiride  & 0.0000 & 11 & 0.0000 & 12\tabularnewline
\hline 
Chloramphenicol  & 0.3050 & 6 & 0.5221 & 6\tabularnewline
\hline 
Naproxen  & 6.1035e-05 & 7 & 0.2455 & 11\tabularnewline
\hline 
Estrone  & 0.7189 & 1 & 0.9202 & 1\tabularnewline
\hline 
BPA  & 0.7189 & 1 & 0.9014 & 2\tabularnewline
\hline 
DEET  & 6.1035e-05 & 7 & 0.3910 & 8\tabularnewline
\hline 
Triclosan  & 0.7189 & 1 & 0.8952 & 3\tabularnewline
\hline 
Benzophenone-3  & 6.1035e-05 & 7 & 0.3869 & 10\tabularnewline
\hline 
Fipronil  & 0.7189 & 1 & 0.8860 & 4\tabularnewline
\hline 
\end{tabular}
\par\end{centering}
\caption{Comparison of offline and online assessment and ranking }

\end{table}
\par\end{center}

\section{Conclusion}

This paper proposes an risk assessment and ranking approach based
on satisficing measure, and develops real-time risk assessment and
ranking policy with online learning and optimization technique. The
basic model is a satisficing optimization model with a family of risk
constrained (Conditional value-at-risk or Optimized certainty equivalent).
In the offline case, we consider performing risk assessment and ranking
when we know the probability distribution of random outcomes or we
have no known information about the probability distribution but have
samples extracted from the underlying distribution in hand. Sample
average approximation can attack both cases. We figure out the convergence
rate with sample size and prove the validation of ranking result from
approximated problem under offline case. For online stochastic optimization
case, we develop an online primal-dual algorithm to solve the problem
with a desired regret bound. For both offline and online case, we
argue the minimal sample size or iteration times required to ensure
a high probability that the estimated ranking is the same as the true
ranking, as well as, given a certain sample size or iteration times,
the probability that the loss function is bounded by a given benchmark. 

\section*{Acknowledgment}

This research is funded by the National Research Foundation (NRF),
Prime Ministers Office, Singapore under its Campus for Research Excellence
and Technological Enterprise (CREATE) program.\bibliographystyle{apalike}
\bibliography{E2S2-Reference,References}

\section*{Appendix I }
\begin{center}
\begin{tabular}{|c|c|c|}
\hline 
\textbf{Name} & $\phi(t)\quad t\geq0$ & $\phi^{*}(s)$\tabularnewline
\hline 
\hline 
Kullback-Leibler & $t\log t-t+1$ & $\exp(s)-1$\tabularnewline
\hline 
Burg entropy & $-\log t+t-1$ & $-\log(1-s),\quad s<1$\tabularnewline
\hline 
$\chi^{2}$distance & $\frac{1}{t}(t-1)^{2}$ & $2-2\sqrt{1-s},\quad s<1$\tabularnewline
\hline 
Modified$\chi^{2}$distance & $(t-1)^{2}$ & $\begin{cases}
-1 & \quad s<-2\\
s+s^{2}/4 & \quad s\geq-2
\end{cases}$\tabularnewline
\hline 
Hellinger distance & $(\sqrt{t}-1)^{2}$ & $\frac{s}{1-s},\quad s<1$\tabularnewline
\hline 
$\chi$- divergence & $|t-1|^{\theta}$ & $s+(\theta+1)\left(\frac{|s|}{\theta}\right)^{\theta/(\theta-1)}$\tabularnewline
\hline 
Variation distance & $|t-1|$ & $\max\left\{ -1,\,s\right\} ,\quad s\leq1$\tabularnewline
\hline 
Cressie-Read & $\frac{1-\theta+\theta t-t^{\theta}}{\theta(1-\theta)},\quad t\neq0,1$ & $\frac{1}{\theta}(1-s(1-\theta))^{\theta/(1-\theta)}-\frac{1}{\theta},\quad s<\frac{1}{1-\theta}$\tabularnewline
\hline 
\end{tabular}
\par\end{center}

\begin{center}
Table 4: Examples of $\phi-$divergence functions and their convex
conjugate functions
\par\end{center}

\section*{Appendix II}

\noun{Proof of Theorem \ref{Theorem 4.6}}: Problem (\ref{General problem-1})
can be reformulated as
\begin{align*}
\max_{0\leq\alpha\leq1,\,\eta\in\mathbb{R}} & \left\{ 1-\alpha:\,g(\alpha):=\mathbb{E}^{P}\left[G\left(\alpha,\,\eta,\,X\right)\right]\leq\tau\right\} ,
\end{align*}
and Problem (\ref{SAA}) can be reformulated as
\begin{align*}
\max_{0\leq\alpha\leq1,\,\eta\in\mathbb{R}} & \left\{ 1-\alpha:\,g_{N}(\alpha):=\frac{1}{N}\sum_{n=1}^{N}G\left(\alpha,\,\eta,\,d_{n}\right)\leq\tau\right\} ,
\end{align*}
Given $\epsilon>0$ define
\[
X^{\epsilon}:=\left\{ \alpha\in\left[0,\,1\right]:\,g(\alpha)\leq\tau+\epsilon\right\} .
\]
Then $X^{0}$ represents the feasible region of Problem (\ref{General problem-1}),
and correspondingly we define
\[
X_{N}^{\epsilon}:=\left\{ \alpha\in\left[0,\,1\right]:\,g_{N}(\alpha)\leq\tau+\epsilon\right\} .
\]
Our goal is to estimate $\mathbb{P}\left\{ X^{-\epsilon}\subseteq X_{N}^{0}\subseteq X^{\epsilon}\right\} .$
That is, we want to claim a feasible solution of SAA of Problem (\ref{SAA})
is $\epsilon-$feasible to the true problem. Suppose Assumption \ref{Assumption 4.1}
hold. Given $\epsilon>0,$ we could set $v:=\left\{ 4(\Psi+\Phi)/\epsilon+2\right\} ^{-1}$,
and obtain
\begin{align*}
 & \mathbb{P}\left\{ X^{-\epsilon}\subseteq X_{N}^{0}\subseteq X^{\epsilon}\right\} \\
\geq & \mathbb{P}\left\{ \exists\alpha\in(0,\,1],\,\textrm{s.t.}\,\Bigl|g_{N}(\alpha)-g(\alpha)\Bigr|\leq\epsilon\right\} \\
\geq & 1-\mathbb{P}\left\{ \exists\alpha\in(0,\,1],\,\textrm{s.t.}\,\sup_{\eta\in\mathbb{R}}\left\{ \biggl|\mathbb{E}^{P}\left[\frac{1}{\alpha}\phi^{\ast}\left(X-\eta\right)\right]-\frac{1}{N}\sum_{n=1}^{N}\frac{1}{\alpha}\phi^{\ast}\left(d_{n}-\eta\right)\biggr|\right\} \geq\epsilon\right\} \\
\geq & 1-\mathbb{P}\left\{ \exists\alpha\in(0,\,1]_{v},\,\textrm{s.t.}\,\sup_{\eta\in E_{v}}\mathbb{E}^{P}\left[\frac{1}{\alpha}\phi^{\ast}\left(X-\eta\right)\right]-\frac{1}{N}\sum_{n=1}^{N}\frac{1}{\alpha}\phi^{\ast}\left(d_{n}-\eta\right)\geq\epsilon-(\Psi+\Psi_{N}+\Phi+\Phi_{N})v\right\} \\
 & -\mathbb{P}\left\{ \exists\alpha\in(0,\,1]_{v},\,\textrm{s.t.}\,\sup_{\eta\in E_{v}}\frac{1}{N}\sum_{n=1}^{N}\frac{1}{\alpha}\phi^{\ast}\left(d_{n}-\eta\right)-\mathbb{E}^{P}\left[\frac{1}{\alpha}\phi^{\ast}\left(X-\eta\right)\right]\geq\epsilon-(\Psi+\Psi_{N}+\Phi+\Phi_{N})v\right\} ,
\end{align*}
Then we have
\begin{align*}
 & \mathbb{P}\left\{ X^{-\epsilon}\subseteq X_{N}^{0}\subseteq X^{\epsilon}\right\} \\
\geq & 1-2\mathbb{P}\left\{ \Psi_{N}>\Psi+\epsilon/2\right\} -2\mathbb{P}\left\{ \Phi_{N}>\Phi+\epsilon/2\right\} \\
 & -2\sum_{\alpha\in(0,\,1],\,\eta\in E_{v}}\mathbb{P}\left\{ \biggl|\mathbb{E}^{P}\left[\frac{1}{\alpha}\phi^{\ast}\left(X-\eta\right)\right]-\frac{1}{N}\sum_{n=1}^{N}\frac{1}{\alpha}\phi^{\ast}\left(d_{n}-\eta\right)\biggr|\geq\epsilon/2\right\} \\
\geq & 1-2\exp\left\{ -N\cdot b(\epsilon)\right\} -\sum_{\alpha\in(0,\,1],\,\eta\in E_{v}}\left[\exp\left\{ -N\cdot b(\epsilon)\right\} \right]\\
\geq & 1-2\left(2+\frac{D_{E}}{v^{2}}\right)\exp\left\{ -N\cdot b(\epsilon)\right\} ,
\end{align*}
where $b(\epsilon)\,:=\min_{\eta\in E_{v}}\left\{ I_{\psi}(\Psi+\epsilon/2),\,I_{\phi}(\Phi+\epsilon/2),\,I_{\eta}(\epsilon/2),\,I_{\eta}(-\epsilon/2)\right\} $,
and we have
\begin{align*}
b(\epsilon) & \geq\min_{\alpha\in(0,1],\,\eta\in E_{v}}\left\{ \frac{\epsilon^{2}}{8\textrm{VaR}\left[\phi(X)\right]},\,\frac{\epsilon^{2}}{8\textrm{VaR}\left[\psi(X)\right]},\,\frac{\epsilon^{2}}{\textrm{VaR}\left[G(\alpha,\,\eta,\,X)-\mathbb{E}\left[G(\alpha,\,\eta,\,X)\right]\right]}\right\} \\
 & \geq\frac{\epsilon^{2}}{8\sigma^{2}}.
\end{align*}
Assume that there exists a positive number $C$ so that, $\tilde{\pi}\leq C$
uniformly, and $\epsilon_{0}$ that $\|\tau-\tilde{\tau}\|_{2}\leq\epsilon_{0}$.
Given $0<\epsilon\leq c,$ we set $v:=\left\{ 4(\Psi+\Phi)C/(\epsilon-C\epsilon_{0})+2\right\} ^{-1}$.
It follows that
\begin{align*}
 & \mathbb{P}\left\{ |\tilde{\alpha}-\alpha|\leq\epsilon\right\} \\
= & \mathbb{P}\left\{ |1-\tilde{\alpha}-(1-\alpha)|\leq\epsilon\right\} \\
\geq & \mathbb{P}\left\{ \biggl|\sup_{0\leq\alpha\leq1,\,t\in C}\left\{ 1-\alpha+\tilde{\pi}\left[g_{N}(\alpha)-\tilde{\tau}\right]\right\} -\sup_{0\leq\alpha\leq1,\,t\in C}\left\{ 1-\alpha+\tilde{\pi}\left[g(\alpha)-\tau\right]\right\} \biggr|\leq\epsilon\right\} \\
= & \mathbb{P}\left\{ \sup_{0\leq\alpha\leq1}\biggl|\tilde{\pi}\left[g_{N}(\alpha)-g(\alpha)\right]\biggr|+|\tilde{\pi}\left(\tau-\tilde{\tau}\right)|\leq\epsilon\right\} \\
= & \mathbb{P}\left\{ \tilde{\pi}\left[\sup_{0\leq\alpha\leq1}\biggl|g_{N}(\alpha)-g(\alpha)\biggr|+|\tau-\tilde{\tau}|\right]\leq\epsilon\right\} \\
\geq & \mathbb{P}\left\{ \sup_{0\leq\alpha\leq1}\biggl|g_{N}(\alpha)-g(\alpha)\biggr|\leq\frac{\epsilon}{C}-\epsilon_{0}\right\} \\
= & 1-\mathbb{P}\left\{ \sup_{0\leq\alpha\leq1}\biggl|g_{N}(\alpha)-g(\alpha)\biggr|\geq\frac{\epsilon}{C}-\epsilon_{0}\right\} \\
\geq & 1-\mathbb{P}\left\{ \exists0\leq\alpha\leq1\,\textrm{s.t.}\,g_{N}(\alpha)-g(\alpha)>\frac{\epsilon}{C}-\epsilon_{0}\right\} \\
 & -\mathbb{P}\left\{ \exists0\leq\alpha\leq1,\,\textrm{s.t.}\,g_{N}(\alpha)-g(\alpha)<-\frac{\epsilon}{C}+\epsilon_{0}\right\} \\
\geq & \mathbb{P}\left\{ |\tilde{\alpha}-\alpha|\leq\epsilon\right\} \geq1-2\left(2+\frac{D_{E}}{v^{2}}\right)\exp\left(-\frac{N(\epsilon-C\epsilon_{0})^{2}}{8\sigma^{2}C^{2}}\right),
\end{align*}
where
\[
v:=\left\{ 4(\Psi+\Phi)C/(\epsilon-C\epsilon_{0})+2\right\} ^{-1},
\]
and
\[
\sigma^{2}:=\text{\ensuremath{\max}}_{\alpha\in(0,1],\,\eta\in E}\left\{ \textrm{VaR}\left[\phi(X)\right],\,\textrm{VaR}\left[\psi(X)\right],\,\textrm{VaR}\left[G(\alpha,\,\eta,\,X)-\mathbb{E}\left[G(\alpha,\,\eta,\,X)\right]\right]\right\} .
\]
In addition, we have
\begin{align*}
 & \mathbb{P}\left\{ |\tilde{l}_{L}-(1-\alpha)|\leq c-\epsilon\right\} \\
= & \mathbb{P}\left\{ |\widetilde{l}-z_{\gamma/2}S_{\tilde{l}}-(1-\alpha)|\leq c-\epsilon\right\} \\
\geq & \mathbb{P}\left\{ \Biggl|\frac{1}{M_{l}}\sum_{m=1}^{M_{l}}\inf_{\alpha\in(0,\,1]}\tilde{\pi}\left[\sup_{u\geq0,\,\eta\in\mathbb{R}}N_{u}^{-1}\sum_{n=1}^{N_{u}}\left\{ \eta+u\mu+\frac{u}{\alpha}\phi^{*}\left(\frac{d_{n}^{m}-\eta}{u}\right)\right\} -\tau\right]\Biggr|+|z_{\gamma/2}S_{\tilde{l}}|\leq c-\epsilon\right\} \\
\geq & \mathbb{P}\left\{ \tilde{\pi}\sup_{\alpha\in\left[0,\,1\right]}\biggl|g_{N}(\alpha)-g(\alpha)\biggr|+|z_{\gamma/2}S_{\tilde{l}}|\leq c-\epsilon\right\} \\
\geq & \mathbb{P}\left\{ \tilde{\pi}\sup_{\alpha\in\left[0,\,1\right]}\biggl|g_{N}(\alpha)-g(\alpha)\biggr|\leq c-\epsilon-z_{\gamma/2}\cdot\frac{M^{2}}{4}\right\} \\
\geq & 1-2\left(2+\frac{D_{E}}{v^{2}}\right)\exp\left\{ \frac{-N\cdot(c-\epsilon-z_{\gamma/2}\cdot\frac{M^{2}}{4})^{2}}{8\sigma^{2}}\right\} ,
\end{align*}
where
\[
v:=\left\{ 4(\Psi+\Phi)/(c-\epsilon-z_{\gamma/2}\cdot\frac{M^{2}}{4})+2\right\} ^{-1},
\]
and 
\[
\sigma^{2}:=\text{\ensuremath{\max}}_{\alpha\in(0,1],\,\eta\in E}\left\{ \textrm{VaR}\left[\phi(X)\right],\,\textrm{VaR}\left[\psi(X)\right],\,\textrm{VaR}\left[G(\alpha,\,\eta,\,X)-\mathbb{E}\left[G(\alpha,\,\eta,\,X)\right]\right]\right\} .
\]
Based on above arguments, next we study the relationship between the
sample size and the validation probability of ranking by SAA problem
to the original problem.

\section*{Appendix III}

\noun{Proof of Theorem \ref{Theorem 5.3}}: In this section, we will
prove Theorem \ref{Theorem 5.3}. Denote $A_{t}:=\frac{1}{2}\|\alpha_{t}-\alpha^{\ast}\|_{2}^{2}$
. We can write
\begin{align*}
A_{t}= & \frac{1}{2}\|\text{\ensuremath{\prod}}_{(0,\,1]}\left(\alpha_{t-1}-\frac{G(\alpha_{t-1},\,\eta_{t-1},\,\lambda_{t-1})}{t}\right)-\alpha^{\ast}\|_{2}^{2}\\
\leq & \frac{1}{2}\|\alpha_{t-1}-\frac{G(\alpha_{t-1},\,\eta_{t-1},\,\lambda_{t-1})}{t}-\alpha^{\ast}\|_{2}^{2}\\
= & A_{t}+\frac{1}{2t^{2}}\|G(\alpha_{t-1},\,\eta_{t-1},\,\lambda_{t-1})\|_{2}^{2}\\
 & +\frac{1}{t}(\alpha_{t-1}-\alpha^{\ast})^{\top}G(\alpha_{t-1},\,\eta_{t-1},\,\lambda_{t-1},)
\end{align*}
and we also have
\begin{align*}
\mathbb{E}\left[(\alpha_{t-1}-\alpha^{\ast})^{\top}G(\alpha_{t-1},\,\eta_{t-1},\,\lambda_{t-1})\right]\\
=\mathbb{E}_{d_{t-2}}\left\{ \mathbb{E}_{d_{t-1}}\left[(\alpha_{t-1}-\alpha^{\ast})^{\top}G(\alpha_{t-1},\,\eta_{t-1},\,\lambda_{t-1})\right]\right\} \\
=\mathbb{E}_{d_{t-2}}\left\{ (\alpha_{t-1}-\alpha^{\ast})^{\top}\mathbb{E}\left[G(\alpha_{t-1},\,\eta_{t-1},\,\lambda_{t-1})\right]\right\} \\
=\mathbb{E}\left[(\alpha_{t-1}-\alpha^{\ast})^{\top}g(\alpha_{t-1},\,\eta_{t-1},\,\lambda_{t-1})\right].
\end{align*}
By taking the expectation of both side, we obtain
\[
\frac{1}{2}\mathbb{E}\left[\|\alpha_{t}-\alpha^{\ast}\|_{2}^{2}\right]\leq\frac{1}{2}\mathbb{E}\left[\|\alpha_{t-1}-\alpha^{\ast}\|_{2}^{2}\right]+\frac{1}{t}\mathbb{E}\left[(\alpha_{t-1}-\alpha^{\ast})^{\top}g(\alpha_{t},\,\eta_{t},\,\lambda_{t})\right]+\frac{1}{2t^{2}}M^{2},
\]
where, based on Assumption \ref{Assumption 5.2} (i), obtain
\[
M^{2}:=\sup_{(\alpha,\,\eta)\in(0,1],\,\lambda\leq0}\mathbb{E}\left[\|G(\alpha_{t-1},\,\eta_{t-1},\,\lambda_{t-1})\|_{2}^{2}\right].
\]
By Lipschitz continuous of function $g$ summarized in Assumption
\ref{Assumption 5.2} (ii), we have
\[
\mathbb{E}\left[(\alpha_{t}-\alpha^{\ast})^{\top}g(\alpha_{t},\,\eta_{t},\,\lambda_{t})\right]\geq\mathbb{E}\left[(\alpha_{t}-\alpha^{\ast})^{\top}\left(g(\alpha_{t},\,\eta_{t},\,\lambda_{t})-g(\alpha^{\ast},\,\eta_{t},\,\lambda_{t})\right)\right]\geq L_{g}\mathbb{E}\left[\|\alpha_{t}-\alpha^{\ast}\|_{2}^{2}\right].
\]
Therefore it follows that
\[
\mathbb{E}\left[\|\alpha_{t}-\alpha^{\ast}\|_{2}^{2}\right]\leq(1-2L_{g}/t)\mathbb{E}\left[\|\alpha_{t-1}-\alpha^{\ast}\|_{2}^{2}\right]+M^{2}/t^{2},
\]
and by induction we get the deterministic convergence rate
\[
\mathbb{E}\left[\|\alpha_{t}-\alpha^{\ast}\|_{2}^{2}\right]\leq\kappa/t,
\]
where
\[
\kappa:=\max\left\{ M^{2}/(2L_{g}-1)^{-1},\,\|\alpha^{0}-\alpha^{\ast}\|_{2}^{2}\right\} .
\]
Follow the same idea ,we obtain
\[
\mathbb{E}\left[\|\alpha_{t}-\alpha^{\ast}\|_{2}^{2}+\|\eta_{t}-\eta^{\ast}\|_{2}^{2}+\|\lambda_{t}-\lambda^{\ast}\|_{2}^{2}\right]\leq3\kappa/t,
\]
where
\[
\kappa:=\max\left\{ M^{2}/(2L_{g}-1)^{-1},\,\|\alpha_{0}-\alpha^{\ast}\|_{2}^{2},\,\|\eta_{0}-\eta^{\ast}\|_{2}^{2},\,\|\lambda_{0}-\lambda^{\ast}\|_{2}^{2}\right\} ,
\]
Let $\delta_{t}=\left[L(\alpha^{\ast},\,\eta^{\ast},\,\lambda^{\ast})\right](d_{t})-C^{\ast}$
and $\xi_{t}=\left[L(\alpha_{t-1},\,\eta_{t-1},\,\lambda_{t-1})\right](d_{t})-\left[L(\alpha^{\ast},\,\eta^{\ast},\,\lambda^{\ast})\right](d_{t})$
be the statistical error and approximation error, where $\alpha^{\ast},\,\eta^{\ast},\,\lambda^{\ast}$
denote the optimal primal and dual solution for problem \ref{Saddle}.
Therefore the step 4 in Algorithm 2 is equivalent to
\[
C_{t}=C_{t-1}-\frac{1}{t}\left(C_{t-1}-C^{\ast}+\xi_{t}+\delta_{t}\right).
\]
Expanding, we have following inequalities
\begin{align*}
\|C_{t}-C^{\ast}\|_{2}^{2} & \leq\|C_{t-1}-C^{\ast}\|_{2}^{2}+\|\frac{1}{t}\left(C_{t-1}-C^{\ast}+\xi_{t}+\delta_{t}\right)\|_{2}^{2}\\
 & -2(C_{t-1}-C^{\ast})^{\top}\frac{1}{t}\left(C_{t-1}-C^{\ast}+\xi_{t}+\delta_{t}\right),
\end{align*}
\[
\mathbb{E}\left[C_{t-1}-\left[L(\alpha_{t-1},\,\eta_{t-1},\,\lambda_{t-1})\right](d_{t})\right]^{2}\leq B,
\]
\[
\|C_{t}-C^{\ast}\|_{2}^{2}\leq\|C_{t-1}-C^{\ast}\|_{2}^{2}+\frac{B-2}{t^{2}}-\frac{2}{t}\kappa_{0}\|C_{t-1}-C^{\ast}\|_{2}^{2}-\frac{L_{\Phi}}{t\kappa_{0}}\left(\|\alpha_{t}-\alpha^{\ast}\|_{2}^{2}+\|\eta_{t}-\eta^{\ast}\|_{2}^{2}+\|\lambda_{t}-\lambda^{\ast}\|_{2}^{2}\right),
\]
and
\[
\|C_{t}-C^{\ast}\|_{2}^{2}\leq(1-2\kappa_{0}/t)\|C_{t-1}-C^{\ast}\|_{2}^{2}+\frac{B-2-\frac{3\kappa L_{\Phi}}{\kappa_{0}}}{t^{2}}.
\]
Then we have
\[
\mathbb{E}\left[\|C_{t}-C^{\ast}\|_{2}^{2}\right]\leq\kappa^{\prime}/t,
\]
where
\[
\kappa^{\prime}:=\max\left\{ B-2-\frac{3\kappa L_{\Phi}}{\kappa_{0}}/(2\kappa_{0}-1)^{-1},\,\|C_{0}-C^{\ast}\|_{2}^{2}\right\} ,
\]
Thus we get the desired result.
\end{document}